\documentclass[12pt,a4paper,reqno]{amsart}
\usepackage{amsmath,amsfonts,amssymb,amsthm,amscd}
\usepackage[english]{babel}
\usepackage[mathscr]{eucal}
\usepackage{graphicx}


\numberwithin{equation}{section}
\renewcommand{\a}{\alpha}
\renewcommand{\b}{\beta}
\newcommand{\g}{\gamma}
\newcommand{\G}{\Gamma}
\renewcommand{\d}{\delta}
\newcommand{\D}{\Delta}

\renewcommand{\l}{\lambda}

\newcommand{\n}{\nu}
\renewcommand{\o}{\omega}
\renewcommand{\O}{\Omega}
\renewcommand{\r}{\rho}

\newcommand{\e}{\varepsilon}
\newcommand{\f}{\varphi}
\newcommand{\F}{\Phi}

\newcommand{\p}{\psi}
\renewcommand{\P}{\Psi}



\newcommand{\C}{{\mathbb C}}

\newcommand{\N}{{\mathbb N}}
\newcommand{\R}{{\mathbb R}}






\newcommand{\Hc}{{\mathcal H}}

\newcommand{\Vc}{{\mathcal V}}


\newcommand{\curl}{{\rm curl}\,}

\newcommand{\diver}{{\rm div}\,}
\newcommand{\inte}{{\rm int}\,}
%



\newcommand{\pd}{\partial}





\newcommand{\supp}{\operatorname{supp\,}}

\newcommand{\loc}{\operatorname{{loc}}}














\newtheorem{theorem}{Theorem}[section]
\newtheorem{proposition}[theorem]{Proposition}
\newtheorem{lemma}[theorem]{Lemma}
\newtheorem{corollary}[theorem]{Corollary}

\theoremstyle{definition}
\newtheorem{definition}[theorem]{Definition}
\newtheorem{assumption}[theorem]{Assumption}

\theoremstyle{remark}
\newtheorem{remark}[theorem]{Remark}
\newtheorem{example}[theorem]{Example}



\newcommand{\na}{\nabla}

\begin{document}


\title[Two Dimensional Incompressible Viscous Flow]
      {Two Dimensional Incompressible Viscous Flow Around a Thin Obstacle Tending to a Curve}
\author[C. Lacave]{Christophe Lacave}
\address[C. Lacave]{Universit\'e de Lyon\\
Universit\'e Lyon1\\
INSA de Lyon, F-69621\\
Ecole Centrale de Lyon\\
CNRS, UMR 5208 Institut Camille Jordan\\
Batiment du Doyen Jean Braconnier\\
43, blvd du 11 novembre 1918\\
F - 69622 Villeurbanne Cedex\\
France}

\email{lacave@math.univ-lyon1.fr}

\date{\today}

\begin{abstract}
In \cite{lac_euler} the author considered the two dimensional Euler equations in the exterior of a thin obstacle shrinking to a curve and determined the limit velocity. In the present work, we consider the same problem in the viscous case, proving convergence to a solution of the Navier-Stokes equations in the exterior of a curve. The uniqueness of the limit solution is also shown.
\end{abstract}

\maketitle

\section{Introduction}

The present paper studies  the influence of a thin material obstacle on the behavior of two-dimensional incompressible viscous flow. The study of flow past slender body is a classical problem in fluid mechanics and it presents a rich literature on experiments and simulations, specially around a flat plane (see for example \cite{phy_1,phy_2,phy_3,phy_4,phy_5,phy_6}). The goal of this work is to establish existence and uniqueness outside a curve. The mathematical study of the problem of small obstacles in incompressible flows has been initiated by Iftimie, Lopes Filho and Nussenzveig Lopes \cite{ift_lop,ift_lop_2,ift_lop_3,lop} and continued in \cite{lac_euler}. Let  $\O_\e$ be a small connected and simply connected bounded open set in $\R^2$.  In all these papers, the initial data consists in the initial vorticity $\o_0$  and the circulation $\g$ of the initial velocity around the boundary of the obstacle. Both $\o_0$ (supposed to be smooth and compactly supported) and the circulation $\g$ are assumed to be independent of $\e$. Given the geometry of the obstacle $\O_\e$, the two previous quantities uniquely determine the initial velocity field $u_0^\e$ (divergence-free, tangent to the boundary and vanishing at infinity). With this initial data, the problem we consider here is to determine the limit of the solutions of the Navier-Stokes equations in the exterior of  $\O_\e$ when the obstacle $\O_\e$ shrinks to a curve as $\e\to 0$. In \cite{ift_lop} the authors studied the vanishing obstacle problem for incompressible, ideal, two-dimensional flow when the obstacle homothetically shrinks to a point. It is proved there that the limit velocity satisfies a modified Euler equation containing an additional term which is a fixed Dirac mass of strength $\g$ in the point where the obstacle shrinks to. In \cite{lac_euler}, the author treated the same problem in the case when the obstacle shrinks to a curve $\G$ instead of a point. In this case, the additional term is of the form $g_\o \d_\G$ where $\d_\G$ is the Dirac mass of the curve. The density $g_\o$ is explicitly computed in  \cite{lac_euler} and depends on the vorticity  and the circulation $\g$. It can be seen as the jump across $\G$ of the velocity field that is divergence free, tangent to $\G$, vanishing at infinity and with curl $\o$ in $\R^2\setminus\G$. The case of several obstacles, one of them shrinking to a point, was treated in \cite{lop}. The two dimensional viscous case where the obstacle shrinks homothetically to a point was studied in \cite{ift_lop_2}, where it is proved that in the case of small circulation the limit equations are always the Navier-Stokes equations where the additional Dirac mass appears only on the initial data. This is due to the fact that the circulation of the initial velocity on the boundary of the obstacle does vanish for $t>0$ when we consider the no-slip boundary condition.

Here we assume that the obstacle shrinks to a curve and we pass to the limit in the Navier-Stokes equations in the exterior of this obstacle. We prove that the limit equations are the Navier-Stokes equations in the exterior of the curve and have a unique solution in a suitable sense. As we shall see in Section \ref{evanescent}, the initial data for the limit velocity is not square-integrable since it behaves as $\frac{x^\perp}{2\pi|x|^2}$ at infinity. For such an initial data we define a solution of the Navier-Stokes equations as a vector field verifying the equation in the sense of distributions and such that the difference between the solution and a fixed smooth vector field behaving like $\frac{x^\perp}{2\pi|x|^2}$ at infinity has the regularity expected from a Leray solution (see  Definition \ref{def} for the precise definition).

More precisely, let $\O_\e$ be a simply connected smooth bounded domain such that $\O_\e$ shrinks to a curve $\G$ as $\e\to 0$ in the sense of Section \ref{evanescent}. The aim of this paper is to prove the following theorem.
\begin{theorem}\label{intro} Let $\o_0$ and $\g$ be independent of $\e$ as defined above. Let $u^\e$ be the solution of the Navier-Stokes equations on $\Pi_\e\equiv \R^2\setminus \overline{\O_\e}$ with initial velocity $u_0^\e$ (see (\ref{NS}) below) and denote by $Eu^\e$ the extension of $u^\e$ to $\R^2$ with values $0$ on $\O_\e$. 
Then $\{ E u^\e\}$ converges in $L^2_{\loc}([0,\infty)\times (\R^2\setminus\G))$ to a solution of the Navier-Stokes equations in $\R^2\setminus\G$ (in the sense of Definition \ref{def}).
\end{theorem}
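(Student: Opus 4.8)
The plan is to follow the standard scheme for vanishing-obstacle problems: establish uniform-in-$\e$ a priori estimates, extract a convergent subsequence by compactness, pass to the limit in the weak formulation, and finally prove uniqueness of the limiting solution in order to upgrade subsequential convergence to convergence of the whole family. The principal difficulty, present from the outset, is that neither $u_0^\e$ nor the solution $u^\e$ is square-integrable: the circulation $\g$ forces the velocity to decay only like $\frac{x^\perp}{2\pi|x|^2}$ at infinity. I would therefore fix once and for all a smooth divergence-free reference field $\bar u$ coinciding with $\frac{\g x^\perp}{2\pi|x|^2}$ outside a fixed ball containing $\G$, so that $v^\e := u^\e - \bar u$ belongs to the energy space and the forcing terms generated by $\bar u$ are all compactly supported, and work throughout with $v^\e$.

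\textbf{Energy estimates.} Writing the Navier-Stokes equations for $v^\e = u^\e - \bar u$ and testing against $v^\e$, the quadratic energy $\|v^\e(t)\|_{L^2(\Pi_\e)}^2$ and the dissipation $\int_0^t \|\na v^\e\|_{L^2(\Pi_\e)}^2$ are controlled, modulo the forcing terms coming from $\bar u$ (namely $\bar u\cdot\na\bar u$, $\D\bar u$, and the transport terms coupling $v^\e$ and $\bar u$). Since these are supported in a fixed compact region and bounded independently of $\e$, a Gronwall argument yields bounds on $v^\e$ in $L^\infty([0,T];L^2)\cap L^2([0,T];H^1)$ uniform in $\e$. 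Here I must check that the constants in the relevant Poincar\'e, Sobolev and Ladyzhenskaya inequalities on $\Pi_\e$ do not degenerate as $\e\to 0$; because the excluded set $\O_\e$ is shrinking, this should follow from a uniform extension estimate on $Ev^\e$.

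\textbf{Compactness and passage to the limit.} With the above bounds, together with a bound on $\pd_t v^\e$ in $L^2([0,T];H^{-1}_{\loc})$ read off from the equation, the Aubin--Lions lemma gives strong convergence of $Ev^\e$ in $L^2_{\loc}([0,\infty)\times(\R^2\setminus\G))$ along a subsequence, to a limit $v$; set $u=v+\bar u$. Strong $L^2_{\loc}$ convergence is exactly what is needed to pass to the limit in the nonlinear term $u^\e\cdot\na u^\e$. Since any test field compactly supported in $\R^2\setminus\G$ is, for $\e$ small, supported in $\Pi_\e$, the weak formulation on $\Pi_\e$ passes directly to the weak formulation on $\R^2\setminus\G$; the only subtlety is that $Eu^\e$ carries no spurious interior contribution, which holds because $u^\e$ is extended by $0$ and $|\O_\e|\to 0$. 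One then checks that $u$ is divergence-free in $\R^2\setminus\G$ and satisfies the no-slip condition on $\G$ in the trace sense.

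\textbf{Initial data and uniqueness.} It remains to identify the initial value of $u$ and to prove uniqueness. For the initial data I would show, using the explicit description of Section \ref{evanescent}, that $u_0^\e\to u_0$ in the appropriate topology, the limiting vorticity carrying the additional mass $g_\o\,\d_\G$ concentrated on the curve. The hardest step, and the one that justifies the word ``converges'' rather than merely ``has a convergent subsequence,'' is the \emph{uniqueness} of the limit in the sense of Definition \ref{def}. I would prove it by a Gronwall/energy argument applied to the difference $w=u_1-u_2$ of two solutions sharing the same data: subtracting the equations, testing against $w$ (which lies in the energy space since both solutions differ from the same $\bar u$ by an $L^2$ field), and controlling the nonlinearity by the two-dimensional inequality $\|w\|_{L^4}^2\lesssim \|w\|_{L^2}\|\na w\|_{L^2}$, so that the nonlinear contribution is absorbed into the dissipation. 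I expect the genuine obstacle to be technical rather than conceptual: justifying these manipulations rigorously in the exterior domain $\R^2\setminus\G$ with a field that does not decay at infinity, and near the endpoints of $\G$, where the velocity is only borderline integrable.
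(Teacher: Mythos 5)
Your overall scheme --- subtract a non-square-integrable reference field, derive uniform energy estimates for the remainder, extract a strongly convergent subsequence by compactness, pass to the limit in the weak formulation, and conclude by uniqueness --- is exactly the paper's. But two concrete points in your set-up would make the energy estimate fail as written. First, the coefficient of the $\frac{x^\perp}{2\pi|x|^2}$ tail of $u_0^\e$ is not $\g$ but $\a=\g+\int\o_0$: one has $u_0^\e=K^\e[\o_0]+\a H^\e$ with $K^\e[\o_0]=O(|x|^{-2})$ and $H^\e\sim\frac{x^\perp}{2\pi|x|^2}$, so subtracting $\frac{\g x^\perp}{2\pi|x|^2}$ leaves a remainder behaving like $(\int\o_0)\frac{x^\perp}{2\pi|x|^2}$, which is still not in $L^2(\R^2)$ unless $\o_0$ has zero mean. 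Second, and more seriously, you do not require $\bar u$ to vanish on $\G_\e$; then $v^\e=u^\e-\bar u$ equals $-\bar u\neq 0$ on the boundary, so testing the equation for $v^\e$ against $v^\e$ produces uncontrolled boundary terms from the viscous and pressure parts, and the extension of $v^\e$ by zero is no longer in $H^1(\R^2)$ --- which is precisely the mechanism you invoke to obtain $\e$-independent constants in the Ladyzhenskaya inequality. The paper avoids both problems by subtracting $v^\e=\a H^\e\F^{\e,\l}$, the obstacle's own harmonic field multiplied by a cutoff vanishing on a fixed neighborhood of $\overline{\O_\e}$, arranged so that $\diver v^\e=0$ exactly because $H^\e$ is orthogonal to $\na|T_\e|$. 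Your fixed, $\e$-independent $\bar u$ can be repaired to do the same job (e.g.\ $\bar u=\na^\perp\bigl(\chi\,\frac{\a}{2\pi}\ln|T|\bigr)$ with $\chi$ vanishing near $\G$), but the vanishing near the obstacle must be stated, since the whole estimate hinges on $W^\e$ lying in $H^1_0(\Pi_\e)$.

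One further gap is in the uniqueness step: the difference $w=u_1-u_2$ is not an admissible test function in Definition \ref{def}, so ``testing against $w$'' has to be justified by approximating $w$ in $L^2([0,T];\Vc_\G)$ by smooth divergence-free fields compactly supported in $(0,T)\times(\R^2\setminus\G)$, together with $\pd_t w\in L^2_{\loc}([0,\infty);\Vc_\G')$ and the Lions--Magenes identity $\frac{d}{dt}\|w\|_{L^2}^2=2\langle\pd_t w,w\rangle$; this is the content of Proposition \ref{approx} and Remark \ref{est_t}, and it is where the nonsmooth geometry of $\R^2\setminus\G$ genuinely enters. Also, the limit is not shown to satisfy a no-slip condition on $\G$ ``in the trace sense''; the boundary behaviour is encoded only through membership of $u-v$ in $\Vc_\G$. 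With these repairs your argument coincides with the paper's.
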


The initial vorticity of this limit solution is $\o_0+g_\o \d_\G$ and the initial velocity is given by the relation 
$$u_0=K[\o_0]+\a H,$$
with $K$ and $H$ defined in \eqref{Kbis} and \eqref{Hbis} depend only on the $\G$ shape, and with $\a=\g + \int \o_0$. Then, this initial velocity is explicitly given in terms of $\o_0$ and $\g$ and can be viewed as the divergence free vector field which is tangent to $\G$, vanishing at infinity, with curl in $\R^2\setminus\G$ equal to $\o$ and with circulation around the curve $\G$ equal to $\g$. This velocity is  blowing up at the endpoints of the curve $\G$ as the inverse of the square root of the distance and has a jump across $\G$. In fact one can also characterize $g_\o$ as the jump of the tangential velocity across $\G$.

Moreover, for such initial data, we also show that a solution of the Navier-Stokes equations in $\R^2\setminus\G$ (in the sense of Definition \ref{def}, which means that the difference between the solution and a fixed smooth vector field behaving like $\frac{x^\perp}{2\pi|x|^2}$ at infinity has the regularity expected from a Leray solution) is unique (see Proposition \ref{prop : unicity} for the precise statement).

The existence of solutions in the Navier-Stokes equations has been studied in general domains in \cite{gallagher} for the dimension two or three for square-integrable data, and in \cite{monniaux} for the dimension three and $H^{\frac12}$ initial data. Kozono and Yamazaki \cite{koz} treated the case of $L^{2,\infty}$ data but for exterior domains which are smooth. A byproduct of Theorem \ref{intro} is the existence and uniqueness of solutions of the Navier-Stokes equations on $\R^2\setminus\G$ in a case which is not covered in previous work. Indeed, the result of \cite{gallagher} does not apply because the initial data of our limit velocity is not square-integrable at infinity. Our extension from square-integrable velocities to velocities that decay like $1/|x|$ is physically meaningful: it allows nonvanishing initial circulation around the obstacle, something which can happen in impulsively started motions. On the other hand, our initial data $u_0$ satisfies the smallness condition of Kozono and Yamazaki \cite{koz} (see (\ref{small_cond}) below), but the domain $\R^2\setminus\G$ is not smooth, as required in \cite{koz}.

The remainder of this work is organized as follows. We introduce in Section 2 a family of conformal mappings between the exterior of $\O_\e$ and the exterior of the unit disk, allowing the use of explicit formulas for basic harmonic fields and the Biot-Savart law. Moreover, we formulate the flow problem in the exterior of a vanishing obstacle and we study the asymptotic behavior of the initial data. In Section 3 we find {\it a priori} estimates which will be used in Section 4 to prove compactness in space-time and perform the passage to the limit stated in Theorem \ref{intro}. In Section 5 the uniqueness of the Navier-Stokes equations on the exterior of a curve is established.

For the sake of clarity, the main notations are listed in an appendix at the end of the paper.

\section{Flow in an exterior domain}

\subsection{Conformal mapping}\

Let $D=B(0,1)$ and $S=\pd D$.
In what follows we identify $\R^2$ with the complex plane $\C$.

We begin this section by recalling some basic definitions on the curve.

\begin{definition} We call a {\it Jordan arc} a curve $C$ given by a parametric representation $C:\f(s)$, $0\leq s\leq 1$ with $\f$ an injective ($=$one-to-one) function, continuous on $[0,1]$. An {\it open Jordan arc} has a parametrization $C:\f(s)$, $0< s<1$ with $\f$ continuous and injective on $(0,1)$.
\end{definition}

The Jordan arc is of class $C^{n}$ ($n\in\N^*$) if its parametrization $\f$ is $n$ times continuously differentiable, satisfying $\f'(s)\neq 0$ for all $s$.

Let $\G: \G(s),0\leq s\leq 1$ be a Jordan arc. Then the subset $\R^2\setminus\G$ is connected and we will denote it by $\Pi$. The purpose of the following proposition is to give some properties of a biholomorphism $T: \Pi \to \inte\ D^c$. After applying a homothetic transformation, a rotation and a translation, we can suppose that the endpoints of the curve are $-1=\G(0)$ and $1=\G(1)$.

\begin{proposition} \label{ana_comp}
If $\G$ is a $C^2$ Jordan arc, such that the intersection with the segment $[-1,1]$ is a finite union of segments and points, then there exists a biholomorphism $T:\Pi\to \inte\ D^c$ which verifies the following properties:
\begin{itemize}
\item $T^{-1}$ and $DT^{-1}$ extend continuously up to the boundary, and $T^{-1}$ maps $S$ to $\G$,
\item $DT^{-1}$ is bounded,
\item $T$ and $DT$ extend continuously to $\G$ with different values on each side of $\G$, except at the endpoints of the curve where $DT$ behaves like the inverse of the square root of the distance.
\item $DT$ is bounded in the exterior of any disk $B(0,R)$, with $\G \subset B(0,R)$,
\item $DT$ is $L^p(\Pi\cap B(0,R))$ for all $p<4$ and $R>0$.
\end{itemize}
\end{proposition}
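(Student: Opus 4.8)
The plan is to construct $T$ as a Riemann map on the sphere and then to read off all five properties from boundary–regularity theory, the only delicate point being the behaviour at the two endpoints $\pm 1$.

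First I would pass to the Riemann sphere $\hat\C=\C\cup\{\infty\}$ and observe that, since $\G$ is a Jordan arc, it does not separate $\hat\C$; hence $\hat\C\setminus\G=\Pi\cup\{\infty\}$ is a simply connected domain. By the Riemann mapping theorem there is a biholomorphism onto the exterior $\inte\ D^c$ of the unit disk, which I normalize by requiring $T(\infty)=\infty$ with $T(z)=\beta z+c_0+O(1/z)$, $\beta\neq 0$, as $z\to\infty$. This already yields the behaviour at infinity: $DT\to\beta$, so $DT$ is bounded outside any disk $B(0,R)\supset\G$ (fourth bullet). Next, because $\G$ is locally connected, Carathéodory's theorem extends $T^{-1}$ continuously to $\overline{\inte\ D^c}$ and carries $S$ onto $\G$; the prime–end structure of a slit shows that each interior point of $\G$ is the image of exactly two points of $S$ (one per side) while each endpoint is the image of a single point. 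On each of the two open arcs of $S$ the curve $\G$ is $C^2$ and bounds a one-sided region, so the Kellogg–Warschawski theorem gives that $T^{-1}$ and $DT^{-1}$ extend continuously (indeed $C^1$) up to $S$ away from the endpoint preimages, and symmetrically that $T$, $DT$ extend continuously to each side of $\G$, with generically different values on the two sides.

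The crux is the local analysis at the endpoints $\pm 1$, where $\G$ terminates in a slit tip: measured inside $\Pi$ the opening is $2\pi$, so Kellogg regularity fails there. Here I would compare with the Joukowski map $J(w)=\tfrac12(w+w^{-1})$, which sends $\inte\ D^c$ conformally onto $\C\setminus[-1,1]$ and has a simple critical point at $w=\pm 1$, producing exactly the square–root behaviour $J(w)\mp 1\sim\tfrac12(w\mp 1)^2$. Writing $T=J^{-1}\circ F$ with $F:\Pi\to\C\setminus[-1,1]$ the Riemann map fixing $\infty$ and the two endpoints, the whole singular behaviour is carried by $J^{-1}$ provided $F$ extends to a $C^1$ diffeomorphism up to the (matching $2\pi$) corners; establishing this last fact is where the $C^2$ hypothesis on $\G$ and the assumption that $\G\cap[-1,1]$ is a finite union of segments and points enter, through the theory of conformal maps at corners of equal angle (Warschawski, Lehman). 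The outcome is that near each endpoint $T(z)\sim(z\mp 1)^{1/2}$ and $T^{-1}(w)\sim(w-w_\pm)^2$. I expect this paragraph to be the genuinely technical part of the argument.

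From these expansions everything else follows. Near the endpoint preimages $DT^{-1}\sim(w-w_\pm)$, so $DT^{-1}$ is bounded globally (second bullet), and $T$, $DT$ blow up like the inverse square root of the distance to the endpoints (third bullet). The only singularity of $DT$ inside any $B(0,R)$ is this $\mathrm{dist}^{-1/2}$ behaviour at the two endpoints; since in two dimensions $r^{-1/2}\in L^p$ near a point iff $\tfrac p2<2$, i.e. $p<4$, one obtains $DT\in L^p(\Pi\cap B(0,R))$ for all $p<4$, with the exponent $4$ sharp (fifth bullet). The first bullet combines the Carathéodory and Kellogg statements with the endpoint expansion. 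Only the corner analysis is nonstandard; the remaining steps are routine once that expansion is in hand.
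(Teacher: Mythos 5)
First, note that the paper does not actually prove Proposition \ref{ana_comp}: it states ``The proof of this proposition can be found in \cite{lac_euler}'', so your sketch has to be measured against the construction given there. That construction composes in the opposite order from yours: one first applies the explicit map $G(z)=z+\sqrt{z^2-1}$, which is single-valued and holomorphic on $\Pi$ precisely because $\G$ joins the two branch points $\pm1$, and which ``opens up'' the slit domain $\Pi$ into the exterior of a genuine Jordan curve $\tilde\G$ (the two sides of $\G$ become the two halves of $\tilde\G$, the tips become single points); all of the $|z\mp1|^{-1/2}$ singularity of $DT$ is carried by this explicit factor. Only then does one apply a Riemann map from the exterior of $\tilde\G$ to $\inte\ D^c$ and invoke Kellogg--Warschawski, which is legitimate there because $\tilde\G$ is a $C^{1,\alpha}$ Jordan curve. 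The hypothesis that $\G\cap[-1,1]$ is a finite union of segments and points is exactly what controls the branch of $\sqrt{z^2-1}$ across $\G$ and guarantees the regularity of $\tilde\G$; your sketch gestures at this hypothesis but assigns it no concrete role.

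The genuine gap in your version is the decomposition $T=J^{-1}\circ F$ with $F:\Pi\to\C\setminus[-1,1]$ ``the Riemann map fixing $\infty$ and the two endpoints''. Once you normalize $F(\infty)=\infty$, the residual freedom is the one-real-parameter family of rotations of $\inte\ D^c$ conjugated by the Joukowski map, whereas sending the two tip prime ends of $\G$ to the two tips $\pm1$ of the slit imposes two real conditions; such an $F$ does not exist for a generic arc, so the object you propose to analyze with Warschawski--Lehman corner asymptotics is not there to be analyzed. A secondary but related defect is the appeal to Kellogg--Warschawski ``on each of the two open arcs of $S$'': that theorem is a statement about Jordan domains, and a slit domain is not one; making the one-sided application rigorous is precisely the step that the explicit square-root map is introduced to avoid. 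The remaining parts of your argument --- Carath\'eodory for the continuous extension of $T^{-1}$, the normalization $T(z)=\beta z+O(1)$ at infinity giving boundedness of $DT$ outside a large disk (this is the content of Remark \ref{T_inf}), and the computation that $\mathrm{dist}^{-1/2}$ lies in $L^p_{\loc}$ of the plane iff $p<4$ --- are correct and coincide with the cited proof.
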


The proof of this proposition can be found in \cite{lac_euler}. Reading it, one understands why we need that the curve is supposed to be more that $C^1$ (in fact, $C^{1,\a}$ can be sufficient). Indeed, we want some continuity properties of the first derivate of $T$, which is possible by the Kellogg-Warschawski Theorem only if $\G$ is enough regular. We also recall from \cite{lac_euler} the following remark:
\begin{remark}\label{T_inf} If we have a biholomorphism $H$ between the exterior of a bounded set $A$ and $D^c$, such that $H(\infty)=\infty$ then there exists a nonzero real number $\b$ and a bounded holomorphic function $h:\Pi\to \C$ such that
$$H(z)=\b z+ h(z),$$
with 
$$h'(z)=O\left(\frac{1}{|z|^2}\right), \text{ as }|z|\to\infty.$$

This property can be applied for the $T$ above, observing that $T$ sends the exterior of a bounded set $B$ to $\inte B(0,2)^c$, hence $T/2= \b z+ h(z)$.
\end{remark}

\subsection{The evanescent obstacle}\label{evanescent}\

We will formulate in this subsection a precise statement of the thin obstacle problem. Many of the key issues regarding the small obstacle limit and incompressible flow have been discussed in detail in \cite{lac_euler}, so we recall briefly some properties.

As in \cite{lac_euler}, we fix $\o_0\in C^\infty_c(\R^2\setminus \G)$. Next, we introduce a family of problems, parametrized by the size of the obstacle. We consider a family of smooth domains $\O_\e$, connected, simply connected and containing $\G$, with $\e$ small enough, such that the support of $\o_0$ does not intersect $\O_\e$. Let $T_\e$ be a biholomorphism between $\Pi_\e\equiv \R^2\setminus \overline{\O_\e}$ and $D^c$, satisfying:
\begin{assumption} \label{assump}
The biholomorphism family   $\{T_\e\}$ verifies
\begin{itemize}
\item[(i)] $\|(T_\e- T)/|T|\|_{L^\infty(\Pi_\e)}\to 0$ as $\e \to 0$,
\item[(ii)] $\det(DT_\e^{-1})$ is bounded on $D^c$ independently of $\e$,
\item[(iii)] for any $R>0$, $\|DT_\e - DT\|_{L^3(B(0,R)\cap \Pi_\e)}\to 0$ as $\e \to 0$,
\item[(iv)] for $R>0$ large enough, there exists  $C_R>0$ such that $|DT_\e(x)|\leq C_R$ on $B(0,R)^c$.
\item[(v)]  for $R>0$ large enough, there exists  $C_R>0$ such that $|D^2 T_\e(x)|\leq \frac{C_R}{|x|}$ on $B(0,R)^c$.
\end{itemize}
\end{assumption}

\begin{remark} \label{remark_assump}
We can observe that property (iii) implies that for any $R$, $DT_\e$ is bounded in $L^p(B(0,R)\cap \Pi_\e)$ independently of $\e$, for $p\leq3$. Moreover, condition (i) means that $T_\e\to T$ uniformly on $B(0,R)\cap \Pi_\e$ for any $R>0$.
\end{remark}

Assumption \ref{assump} corresponds to Assumption 3.1 in \cite{lac_euler}, adding part (v) and strengthening property (i) therein. Before going on, we give an example of obstacle family.
\begin{example}
We consider $\O_\e\equiv T^{-1}(B(0,1+\e)\setminus D)$. In this case, $T_\e=\frac{1}{1+\e}T$, which verifies the previous assumption. In fact, taking Proposition \ref{ana_comp} into account $\|DT_\e - DT\|_{L^p(B(0,R)\cap \Pi_\e)}\to 0$ for all $p<4$, and using Remark \ref{remark_assump}, $|D^2 T_\e(x)|\leq \frac{C_R}{|x|^3}$ on $B(0,R)^c$, but we will not need so stronger estimates. If $\G$ is a segment, then $\O_\e$ is the interior of an ellipse around the segment.
\end{example}

We denote by $\G_\e\equiv \pd \O_\e$. Moreover, we denote by $G^\e=G^\e(x,y)$ the Green's function of the Laplacian in $\Pi_{\e}$, by $K^\e(x,y)=\na_x^\perp G^\e(x,y)$ the kernel of the Biot-Savart law on $\Pi_\e$ and we denote the associated integral operator by $f\mapsto K^\e[f]=\int_{\Pi_\e}K^\e(x,y) f(y) dy$. Let $H^\e(x)$ be the unique harmonic vector field on $\Pi_\e$ which verifies the condition 
$\oint_{\G_\e} H^\e\cdot {\bf ds}=1$, where the contour integral is taken in the counterclockwise sense. Both $K^\e$ and $H^\e$ depend on $T^\e$, and we recall explicit formulas find in the Section 3.2 of \cite{lac_euler}:
\begin{equation} \label{K} 
K^\e  = \dfrac{1}{2\pi} DT_\e^t(x)\Bigl(\dfrac{(T_\e(x)-T_\e(y))^\perp}{|T_\e(x)-T_\e(y)|^2}-\dfrac{(T_\e(x)- T_\e(y)^*)^\perp}{|T_\e(x)- T_\e(y)^*|^2}\Bigl)
\end{equation}
and 
\begin{equation}\label{H}
H^\e=\frac{1}{2\pi}DT_\e^t(x)\Bigl(\frac{(T_\e(x))^\perp}{|T_\e(x)|^2}\Bigl),
\end{equation}
where $T_\e(y)^*=\frac{T_\e(y)}{|T_\e(y)|^2}$.

We recall from \cite{ift_lop} that given  $\o_0\in C^\infty_c (\R^2\setminus \G)$ and $\g\in \R$, for $\e>0$,there exists a unique $u_0^\e$ such that $\diver u^\e_0=0$, $\curl u^\e_0=\o_0$, $\oint_{\G_\e} u_0^\e\cdot {\bf ds}=\g$, $u^\e_0$ is tangent to $\G_\e$ and vanishes at infinity. Moreover, there exists a unique $\a$ such that 
\begin{equation}\label{u_0}
u^\e_0=K^\e[\o^\e_0]+\a H^\e.
\end{equation}
By Stokes Theorem, we have that $\a=\g+m$  with $m\equiv \int_{\R^2}\o_0dx$ (see the proof of Lemma 3.1 in \cite{ift_lop}).

Now, we require information on far-field behavior. We know from Subsection 2.2 of \cite{lac_euler} that
\begin{equation*}\begin{split}
|u_0^\e(x)|\leq \frac{|DT_\e(x)|}{2\pi}\int_{\supp \o_0}\frac{|T_\e(y)-T_\e(y)^*|}{|T_\e(x)-T_\e(y)||T_\e(x)-T_\e(y)^*|}&|\o_0(y)|dy\\
&+ \frac{|DT_\e(x)|}{2\pi |T_\e(x)|}.
\end{split}\end{equation*}
Thanks to Assumption \ref{assump} (i),(iv), and to the form of $T(x)$ at infinity (see Remark \ref{T_inf}), there exist $R>0$ and $C>0$ independent of $\e$ such that  
\begin{equation}\label{H_inf}
|K^\e[\o_0](x)|\leq C/|x|^2 \text{\ and\ } |H^\e(x)|\leq C/|x|, \forall |x|\geq R,
\end{equation}
since $\o_0\in C^\infty_c(\Pi_\e)$.

Let $u^\e=u^\e(x,t)=(u_1^\e(x_1,x_2,t),u_2^\e(x_1,x_2,t))$ be the velocity of an incompressible, viscous flow in $\Pi_\e$. We assume that $u^\e$ verifies the no-slip condition at any positive time and $u^\e\to 0$ when $|x|\to \infty$. The evolution of such a flow is governed by the Navier-Stokes equations:
\begin{equation} \label{NS}
\left\lbrace\begin{aligned}
&\pd_t u^\e-\nu \D u^\e+u^\e\cdot \na u^\e=-\na p^\e &\text{ in }{\Pi_\e}\times(0,\infty) \\
&\diver u^\e =0 &\text{ in } {\Pi_\e}\times[0,\infty) \\
&u^\e =0 &\text{ in } \G_\e\times(0,\infty) \\
&\lim_{|x|\to\infty}|u^\e|=0 & \text{ for }t\in[0,\infty)\\
&u^\e(x,0)=u_0^\e(x) &\text{ in } \Pi_\e
\end{aligned}\right.
\end{equation}
 
As $u_0^\e$ is smooth, and therefore locally bounded, the behavior at infinity given in (\ref{H_inf}) allows us to observe that $u_0^\e\in L^{2,\infty}(\Pi_\e)\cap L^p(\Pi_\e)$ with $p>2$. Global-in-time well-posedness for problem (\ref{NS}) was established by Kozono and Yamazaki \cite{koz}. The existence part of Kozono and Yamazaki's result requires that the initial velocity $u_0^\e$ satisfy a smallness condition of the form
\begin{equation}\label{small_cond}
\limsup_{R\to\infty} R|\{x\in \Pi_\e\mid |u_0^\e(x)|>R\}|^{1/2}\ll 1.
\end{equation}
Since $u_0^\e$ is bounded, the limsup is always zero, for any $\e>0$. Uniqueness holds without any additional conditions.


We conclude this subsection with the definition of a cutoff function family. 
Let $\F\in C^\infty(\R)$ be a non-decreasing function such that $0\leq\F\leq 1$, $\F(s)=1$ if $s\geq 2$ and $\F(s)=0$ if $s\leq 1$. Then, for $\l\geq 2$, we introduce 
\begin{equation}\label{phi}
\F^{\e,\l}=\F^{\e,\l}(x)\equiv\F\Bigl(\frac{|T_\e(x)|-1}{\l}\Bigl).
\end{equation}
Thanks to the uniform convergence of $T_\e$ to $T$ on bounded sets (see Assumption \ref{assump} (i)), we note that the cutoff function $\F^{\e,\l}$ vanishes in a ball of radius $C_1\l$ and it is identically equal to $1$ outside a larger ball of radius $C_2\l$, with $C_1$ and $C_2$ independent of $\e$. Furthermore, the radii of the annulus where $\F^{\e,\l}$ is not constant can be made independent of $\e$.

\subsection{Asymptotic initial data}\

The purpose of this section is to study the convergence, as $\e\to 0$, of the initial velocity fields $u_0^\e$. First, we introduce some notation. For each function $f$ defined on $\Pi_\e$, we denote by $Ef$ the extension of $f$ to $\R^2$, by setting $Ef\equiv 0$ in $\Pi_\e$. If $f$ is regular enough and vanishes on $\pd \O_\e$, one has that $\na Ef = E \na f $ in $\R^2$. If $v$ is a regular enough vector field defined on $\Pi_\e$ and tangent to $\pd \O_\e$, then $\diver Ev = E \diver v $ in $\R^2$. In particular, we have $\diver E u_0^\e=0$ in $\R^2$.

The following lemmas are consequences of the case of an ideal fluid treated in \cite{lac_euler}.  

\begin{lemma}\label{u_0_est} For $2<p\leq 3$, there exists $C_p>0$, which depends only on the shape of $\G$ and $\o_0$, such that $\|E u_0^\e\|_{L^p(\R^2)}\leq C_p$.
\end{lemma}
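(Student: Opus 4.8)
The plan is to reduce the whole estimate to a uniform bound on $DT_\e$ in a bounded region together with the far-field decay already recorded in \eqref{H_inf}. Since $Eu_0^\e$ vanishes on $\O_\e$ we have $\|Eu_0^\e\|_{L^p(\R^2)}=\|u_0^\e\|_{L^p(\Pi_\e)}$, so I would fix $R>0$ with $\G\subset B(0,R)$ large enough for \eqref{H_inf} and for Assumption \ref{assump}(iv)--(v) to hold, then split $\Pi_\e=(\Pi_\e\cap B(0,R))\cup(\Pi_\e\cap B(0,R)^c)$ and treat the two pieces separately.

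On the exterior piece the work is essentially done. Recalling $u_0^\e=K^\e[\o_0]+\a H^\e$ with $\a=\g+m$ fixed, the bounds \eqref{H_inf} give $|u_0^\e(x)|\le C/|x|^2+|\a|\,C/|x|\le C'/|x|$ for $|x|\ge R$, with $C'$ independent of $\e$. Since $p>2$, a polar-coordinate computation gives $\int_{|x|\ge R}|x|^{-p}\,dx=2\pi R^{2-p}/(p-2)<\infty$, so this contribution is bounded by a constant depending only on $\G$, $\o_0$ and $p$. This is precisely where the hypothesis $p>2$ is used.

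On the bounded piece I would exploit the explicit formulas \eqref{K}--\eqref{H}, writing $u_0^\e(x)=DT_\e^t(x)\,w^\e(x)$, where $w^\e$ collects the bracketed factors: the term $\a\,(T_\e(x))^\perp/|T_\e(x)|^2$ coming from $H^\e$, and the kernel integral coming from $K^\e[\o_0]$. The $H^\e$-part is immediately bounded by $|\a|$, since $|T_\e(x)|\ge 1$ on $\Pi_\e$. The crucial point is the uniform $L^\infty$ bound on the $K^\e[\o_0]$-part: performing the change of variables $z=T_\e(y)$ turns it into $\frac{1}{2\pi}\int_{D^c}\bigl((a-z)^\perp/|a-z|^2-(a-z^*)^\perp/|a-z^*|^2\bigr)\,\o_0(T_\e^{-1}(z))\,\det(DT_\e^{-1})(z)\,dz$ with $a=T_\e(x)$. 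By Assumption \ref{assump}(ii) the Jacobian is uniformly bounded, and by Assumption \ref{assump}(i) (see Remark \ref{remark_assump}) the transported density $\o_0\circ T_\e^{-1}$ is bounded by $\|\o_0\|_\infty$ and supported in a fixed compact subset of $D^c$ staying at positive distance from $S$; the first image-kernel term has the locally integrable singularity $1/|a-z|$, while the second has no singularity because $z^*$ stays away from $S$. Hence the integral is bounded uniformly in $a\in D^c$ and in $\e$, so $\|w^\e\|_{L^\infty(\Pi_\e)}\le C$ with $C$ depending only on $\G$ and $\o_0$.

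It then remains to combine this with the uniform $L^p$ control of the conformal derivative. From $|u_0^\e(x)|\le\|w^\e\|_{L^\infty}\,|DT_\e(x)|$ and Remark \ref{remark_assump}, which yields $\|DT_\e\|_{L^p(B(0,R)\cap\Pi_\e)}\le C_p$ uniformly in $\e$ for $p\le 3$, I obtain $\|u_0^\e\|_{L^p(\Pi_\e\cap B(0,R))}\le C\,C_p$; this is exactly where the restriction $p\le 3$ enters, since near the endpoints of $\G$ the derivative $DT_\e$ behaves like the inverse square root of the distance and is only uniformly $L^p$ for $p\le 3$. Adding the exterior and bounded contributions completes the proof. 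I expect the main obstacle to be the uniform-in-$\e$ $L^\infty$ estimate of the Biot--Savart bracket $w^\e$: one must check that the passage to the exterior of the disk via $T_\e$ keeps the support of the transported vorticity at a positive distance from $S$ and the Jacobian controlled, uniformly in $\e$, so that the resulting constant genuinely depends only on the shape of $\G$ and on $\o_0$.
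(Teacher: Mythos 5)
Your proof is correct and follows essentially the same route as the paper: the same splitting into $B(0,R)\cap\Pi_\e$ and its complement, the pointwise domination $|u_0^\e|\le C|DT_\e|$ combined with Remark \ref{remark_assump} for $p\le 3$ on the bounded piece, and the decay \eqref{H_inf} with $p>2$ at infinity. The only difference is that where the paper imports the domination $\|Eu_0^\e\|_{L^p(S)}\le C\|EDT_\e\|_{L^p(S)}$ from Theorem 4.4 of \cite{lac_euler}, you reprove it inline via the change of variables $z=T_\e(y)$ and Assumption \ref{assump}(i)--(ii), which is a valid and self-contained way to obtain the same uniform $L^\infty$ bound on the Biot--Savart bracket.
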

\begin{proof} By Theorem 4.4 of \cite{lac_euler}, we state that $\|E u_0^\e\|_{L^p(S)}\leq C \|E DT_\e\|_{L^p(S)}$ for any $S\subset \R^2$. Then we can use Remark \ref{remark_assump} to observe that for any $R>0$, we can find a constant  $C_p$ such that $\|E u_0^\e\|_{L^p(B(0,R))}\leq C_p$ for $p\leq 3$. Recalling (\ref{u_0}) and (\ref{H_inf}), the desired conclusion follows since the function $x\mapsto 1/|x|$ is $L^p$ at infinity for $p>2$.
\end{proof}

\begin{lemma}\label{u_0_conv} We have that $Eu_0^\e \to K[\o_0]+\a H$ strongly in $L^2_{\loc}(\R^2)$ as $\e\to 0$, where $K$ and $H$ are defined as $K^\e$ and $H^\e$ respectively (see (\ref{K}) and (\ref{H}))by replacing $T_\e$ by $T$.
\end{lemma}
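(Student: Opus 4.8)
The plan is to treat the two constituents of the decomposition $u_0^\e=K^\e[\o_0]+\a H^\e$ separately, proving $E(K^\e[\o_0])\to K[\o_0]$ and $EH^\e\to H$ strongly in $L^2(B(0,R))$ for each fixed $R$; since $\a=\g+m$ is independent of $\e$, summing yields the claim. In both terms the structure is identical: an explicit factor $DT_\e^t(x)$, which converges to $DT^t$ only in $L^3(B(0,R)\cap\Pi_\e)$ by Assumption \ref{assump}(iii), multiplied by a factor built from $T_\e(x)$ (and, for $K^\e$, an integral against $\o_0$) that I will show is uniformly bounded and convergent in every $L^q_{\loc}$. A product of an $L^3$-convergent factor with a uniformly bounded, $L^q$-convergent factor then converges in $L^2$ by H\"older's inequality.

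For the harmonic field I would first note that $z\mapsto z^\perp/|z|^2$ is bounded by $1$ and globally Lipschitz on $\{|z|\ge 1\}$. Since $T_\e$ maps $\Pi_\e$ into $D^c$ and $T_\e\to T$ uniformly on $B(0,R)$ (Remark \ref{remark_assump}), the factor $T_\e^\perp/|T_\e|^2$ is uniformly bounded and converges uniformly to $T^\perp/|T|^2$ on $B(0,R)\cap\Pi_\e$. Writing $H^\e-H=\tfrac1{2\pi}(DT_\e^t-DT^t)\tfrac{T_\e^\perp}{|T_\e|^2}+\tfrac1{2\pi}DT^t\bigl(\tfrac{T_\e^\perp}{|T_\e|^2}-\tfrac{T^\perp}{|T|^2}\bigr)$, the first term is $O(\|DT_\e-DT\|_{L^2})\to0$ and the second is $O(\|T_\e-T\|_{L^\infty}\|DT\|_{L^2})\to0$ on $B(0,R)\cap\Pi_\e$, using $DT\in L^2_{\loc}$.

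For $K^\e[\o_0]$ the same splitting works once I establish that $\Kc^\e(x):=\int_{\supp\o_0}\bigl(\frac{(T_\e(x)-T_\e(y))^\perp}{|T_\e(x)-T_\e(y)|^2}-\frac{(T_\e(x)-T_\e(y)^*)^\perp}{|T_\e(x)-T_\e(y)^*|^2}\bigr)\o_0(y)\,dy$ is uniformly bounded and converges in $L^q(B(0,R))$ for every $q<\infty$. Uniform boundedness follows because $\supp\o_0$ stays a fixed positive distance from $\G$, so $T_\e$ is uniformly bi-Lipschitz there and the only singularity, at $y=x$, is of order $|x-y|^{-1}$, hence uniformly integrable, while the image term is nonsingular since $T_\e(y)^*$ remains strictly inside $D$. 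For fixed $x$, dominated convergence in $y$ (with the integrable dominant $C|x-y|^{-1}|\o_0(y)|$) gives $\Kc^\e(x)\to\Kc(x)$; uniform boundedness then upgrades this to $L^q(B(0,R))$-convergence by dominated convergence in $x$. H\"older with $DT\in L^3_{\loc}$ and $\Kc^\e-\Kc\in L^6$ then closes this term.

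Finally the extension mismatch must be addressed: on $\O_\e$ the extensions vanish while the limits do not, contributing $\int_{\O_\e\cap B(0,R)}|K[\o_0]|^2$ and $\int_{\O_\e\cap B(0,R)}|H|^2$. Since $K[\o_0],H\in L^2_{\loc}$ (indeed $L^p_{\loc}$ for $p<4$, as $DT\in L^{<4}$ by Proposition \ref{ana_comp}) and $|\O_\e\cap B(0,R)|\to0$ as $\O_\e$ shrinks to $\G$, these terms vanish by absolute continuity of the integral. The hard part is the $\e$-uniform control of the singular integral $\Kc^\e$ near $\supp\o_0$ — establishing the uniform bi-Lipschitz bound for $T_\e$ there and the resulting uniform integrability of the kernel; everything else reduces to H\"older's inequality together with dominated convergence.
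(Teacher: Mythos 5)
Your argument is correct, but it is genuinely more self-contained than the paper's: the paper proves this lemma in one line by invoking Subsection 5.1 of \cite{lac_euler}, where the same convergence is established for the ideal-flow velocity at each fixed time (the case $t=0$ being what is needed here), whereas you redo the estimate directly from the explicit formulas \eqref{K}--\eqref{H} and Assumption \ref{assump}. Your mechanism --- writing each term as $DT_\e^t$ times a bounded factor, splitting the difference into $(DT_\e^t-DT^t)$ times the bounded factor plus $DT^t$ times the difference of bounded factors, using Assumption \ref{assump}(iii) for the first piece and the uniform convergence $T_\e\to T$ for the second, and discarding $\O_\e\cap B(0,R)$ by absolute continuity of the integral --- is exactly what underlies the cited result, so nothing is lost and the reader gains a proof that does not require opening the companion paper. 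One refinement is worth making at the point you yourself flag as the hard step: a uniform bi-Lipschitz bound for $T_\e$ near $\supp\o_0$ is not among the hypotheses, and the cleaner (and intended) route to the $\e$-uniform bound on $\int_{\supp\o_0}|T_\e(x)-T_\e(y)|^{-1}|\o_0(y)|\,dy$ is the change of variables $\y=T_\e(y)$: Assumption \ref{assump}(ii) bounds $\det DT_\e^{-1}$ uniformly, $T_\e(\supp\o_0)$ stays in a fixed bounded subset of $D^c$ by Assumption \ref{assump}(i), and $\int_A|z-\y|^{-1}\,d\y$ is bounded uniformly in $z$ for any bounded set $A$. This also covers $x$ arbitrarily close to $\G$ or to its endpoints, where a bi-Lipschitz estimate on a neighbourhood of $\supp\o_0$ alone does not immediately control the denominator; the image term is likewise uniformly nonsingular because $|T|\ge 1+\d$ on $\supp\o_0$ for some $\d>0$ and Assumption \ref{assump}(i) transfers this to $T_\e$. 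With that substitution your proof closes.
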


\begin{proof} This result is a consequence of Subsection 5.1 of \cite{lac_euler}, where it is shown that in the case of an ideal flow, $\F^\e u^\e \to u\equiv K[\o]+\a H $ strongly in $L^2_{\loc}([0,T]\times \R^2)$ with $\F^\e\equiv \F^{\e,\e}$. This has been done in two steps: first we prove that  $\F^\e u^\e \to u$ strongly in $L^2_{\loc}(\R^2)$ for each $t\geq 0$, and then the dominated convergence theorem allows to get the convergence in  $L^2_{\loc}([0,T]\times \R^2)$. Here the first step is sufficient to complete the proof.
\end{proof}

For the rest of the paper, we define 
\begin{equation}
  \label{initvel}
u_0=K[\o_0]+\a H,  
\end{equation}
with
\begin{equation}\label{Kbis}
K   = \dfrac{1}{2\pi} DT^t(x)\Bigl(\dfrac{(T(x)-T(y))^\perp}{|T(x)-T(y)|^2}-\dfrac{(T(x)- T(y)^*)^\perp}{|T(x)- T(y)^*|^2}\Bigl)
\end{equation}
and 
\begin{equation}\label{Hbis}
H =\frac{1}{2\pi}DT^t(x)\Bigl(\frac{(T(x))^\perp}{|T(x)|^2}\Bigl).
\end{equation}
By Proposition 5.7 of \cite{lac_euler}, we know that $u_0$
\begin{itemize}
\item[i)] is continuous on $\R^2\setminus \G$,
\item[ii)] is continuous up to $\G\setminus \{-1;1\}$, with different values on each side of $\G$,
\item[iii)] blows up at the endpoints of the curve like $C/\sqrt{|x-1||x+1|}$, which belongs to $L^p_{\loc}$ for $p<4$.
\item[iv)] is tangent to the curve.
\end{itemize}
Moreover, the Subsection 5.2 of \cite{lac_euler} states also that $u_0$ is a divergence free vector field, vanishing at infinity, with $\curl u_0=\o_0+g_{\o_0}(s)\d_\G$ in $\R^2$, where $\d_\G$ is the Dirac function of the curve $\G$, and the $g_{\o_0}$ depends on $\o_0$ and the circulation $\g$. One can also characterize $g_{\o_0}$ as the jump of the tangential velocity across $\G$. Then we know that $u_0$ is bounded except at the endpoints where it is equivalent to the inverse of the square root of the distance, and so $u_0$ verifies the smallness condition (\ref{small_cond}).

\section{Velocity estimates}

We start by introducing some functional spaces which embed the divergence-free and no-slip conditions.
\begin{definition} \label{spaces} Let $\O$ be an open set in $\R^2$. We denote by $V (\O)$ the space of divergence-free vector fields, the components of which belong to $C_c^\infty(\O)$.
The closure of $V (\O)$ in $H^1(\O)$ is denoted by $\Vc(\O)$, and its dual space by $\Vc'(\O)$.
Finally, we denote by $\Hc(\O)$ the closure of $V (\O)$ in $L^2(\O)$.
To simplify the notation, we also set
$\Vc_\G\equiv \Vc (\R^2\setminus \G)$ and $\Hc_\G\equiv \Hc (\R^2\setminus \G)$.
\end{definition}

Since the initial data $u_0^\e$ does not belong to $L^2$ ($u_0^\e= O(1/|x|)$ at infinity), we will remove the harmonic part at infinity. To this end, we denote $W^\e(t,x)= u^\e(t,x)-v^\e(x)$, where $v^\e=\a H^\e \F^{\e,\l}$, with fixed $\l$, chosen to be sufficiently large so that the radii of the balls where $\F^{\e,\l}$ vanishes, for each $\e>0$, are large enough to satisfy Assumption \ref{assump} (iv),(v). This choice of $\l$ is possible because the radii of these balls are $O(\l)$. Without any loss of generality, we may assume in addition that these balls contain $\overline{\O_\e}$. Thanks to Assumption \ref{assump} and (\ref{H_inf}), we can deduce the following estimates on $v^\e$.
\begin{lemma} \label{v_est} For $\l$ fixed (large enough independently of $\e$), we have
\begin{itemize}
\item[(a)] $v^\e$ are bounded in $L^4(\R^2)$ independently of $\e$
\item[(b)] $\na v^\e$ are bounded in $L^2(\R^2)$ independently of $\e$
\item[(c)] $\D v^\e$ are bounded in $L^\infty(\R^2)$ independently of $\e$ and supported in a compact set independent of $\e$.
\end{itemize}
\end{lemma}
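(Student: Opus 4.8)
The plan is to prove the three estimates on $v^\e = \a H^\e \F^{\e,\l}$ directly from the explicit formula for $H^\e$ in \eqref{H}, together with the structure of the cutoff $\F^{\e,\l}$ and the parts of Assumption \ref{assump} that have been arranged to hold on the relevant region. The key geometric observation, which I would establish first, is that $\F^{\e,\l}$ is supported outside a ball of radius $\sim C_1\l$ and is constant (equal to $1$) outside a ball of radius $\sim C_2\l$, with both radii independent of $\e$; by the choice of $\l$ this support lies entirely in the region $B(0,R)^c$ where Assumption \ref{assump} (iv),(v) furnish the bounds $|DT_\e|\le C_R$ and $|D^2 T_\e|\le C_R/|x|$. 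Thus on $\supp v^\e$ we may freely use these uniform-in-$\e$ derivative bounds on $T_\e$, and from the far-field estimate \eqref{H_inf} we have $|H^\e(x)|\le C/|x|$ there.

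For part (a), since $v^\e$ is supported where $|x|\gtrsim \l$ and $|v^\e(x)|\le |\a|\,|H^\e(x)|\le C/|x|$, I would estimate $\|v^\e\|_{L^4(\R^2)}^4 \le C\int_{|x|\ge C_1\l} |x|^{-4}\,dx$, which converges because $x\mapsto |x|^{-1}$ is in $L^4$ at infinity in $\R^2$; the bound is independent of $\e$ because the constants $C$, $C_1$ and $\l$ are. For part (b), I would compute $\na v^\e = \a(\na H^\e)\F^{\e,\l} + \a H^\e \otimes \na \F^{\e,\l}$ and treat the two terms separately. The second term is supported in the annulus $C_1\l \le |x|\le C_2\l$ of fixed width where $|\na\F^{\e,\l}|\le C/\l$ (differentiating \eqref{phi} brings down a factor $|\na |T_\e||/\l$, and $|DT_\e|$ is bounded there); combined with $|H^\e|\le C/|x|\le C/\l$ and the fact that the annulus has area $O(\l^2)$, its $L^2$ norm is $O(1)$ uniformly in $\e$. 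For the first term I need $\na H^\e \in L^2$ of the support region; differentiating \eqref{H} produces terms controlled by $|D^2 T_\e|\,|DT_\e|\,|H\text{-type kernel}|$ and $|DT_\e|^2$ times a kernel decaying like $|x|^{-2}$, so using $|D^2T_\e|\le C_R/|x|$ and $|DT_\e|\le C_R$ one gets $|\na H^\e(x)|\le C/|x|^2$ on $B(0,R)^c$, and $\int_{|x|\ge C_1\l}|x|^{-4}\,dx<\infty$ again gives a uniform bound. For part (c), I would note that $\D v^\e = \a\bigl((\D H^\e)\F^{\e,\l} + 2\na H^\e\cdot\na\F^{\e,\l} + H^\e \D\F^{\e,\l}\bigr)$; since $H^\e$ is harmonic where it is defined, $\D H^\e = 0$ on the relevant region, so only the last two terms survive, and they are supported precisely in the fixed annulus where $\F^{\e,\l}$ varies. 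On that annulus $\na\F^{\e,\l}$ and $\D\F^{\e,\l}$ are bounded (by $C/\l$ and $C/\l^2$ respectively, with an extra contribution $|D^2T_\e|/\l\le C/(|x|\l)$), and $H^\e$, $\na H^\e$ are bounded there, so $\D v^\e$ is bounded in $L^\infty$ uniformly in $\e$ and supported in the fixed compact annulus, as claimed.

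The main obstacle is verifying the claim $|\na H^\e(x)|\le C/|x|^2$ with a constant independent of $\e$; this is precisely where Assumption \ref{assump} (v) is needed and is the reason that hypothesis was added (the excerpt notes (v) strengthens the earlier assumptions). One must differentiate the composite expression \eqref{H} carefully, tracking the chain rule through $T_\e$: the derivative hits either the outer algebraic kernel $z\mapsto z^\perp/|z|^2$ (gaining a factor $|DT_\e|\cdot |z|^{-2}$ with $z=T_\e(x)$, and $|T_\e(x)|\sim |x|$ at infinity by the form of $T$ at infinity in Remark \ref{T_inf}) or the prefactor $DT_\e^t$ (producing $D^2T_\e$, bounded by $C_R/|x|$). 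Both resulting contributions decay like $|x|^{-2}$, but making this rigorous and uniform in $\e$ requires the uniform control of $T_\e$, $DT_\e$ and $D^2T_\e$ on $B(0,R)^c$ that Assumption \ref{assump} (iv),(v) and Remark \ref{T_inf} provide, together with the lower bound $|T_\e(x)|\ge c|x|$ for large $|x|$. Once this pointwise gradient estimate is in place, the integrability computations in parts (a)--(c) are routine applications of the fact that $|x|^{-k}$ is integrable at infinity in $\R^2$ for $k>2$ and that the cutoff confines everything either to a region where $|x|\gtrsim\l$ or to a fixed-geometry annulus.
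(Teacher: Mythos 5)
Your proposal is correct and follows essentially the same route as the paper: bound $v^\e$ by $C\F^{\e,\l}/|T_\e(x)|\sim C/|x|$ using Assumption \ref{assump} (i),(iv) and Remark \ref{T_inf}, split $\na v^\e$ into the annulus term (bounded, fixed compact support) and the $\F^{\e,\l}\na H^\e$ term bounded by $C/|x|^2$ via Assumption \ref{assump} (v), and use the harmonicity of $H^\e$ so that $\D v^\e$ survives only where the cutoff varies. The paper carries out the same computation, justifying $\D H^\e=0$ explicitly by writing $H^\e=\na^\perp\Re(\ln T_\e)$ with $\ln T_\e$ holomorphic.
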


\begin{proof} We recall the explicit formula of $v^\e$:
$$v^\e(x) = \frac{\a}{2\pi}\F^{\e,\l}(x) DT_\e^t(x)\Bigl(\frac{(T_\e(x))^\perp}{|T_\e(x)|^2}\Bigl),$$
with $\F^{\e,\l}$ given in (\ref{phi}).

As $\F^{\e,\l}$ vanishes in a ball of radius $O(\l)$, the conditions (i) and (iv) of Assumption \ref{assump} guarantee that $v^\e$ is uniformly bounded by $C \F^{\e,\l}(x)/|T(x)|$ for sufficiently large $\l$. Since the function $T$ behaves like $ \b x$ at infinity, the first estimate of the lemma is a consequence of the fact that $1/|x|$ is $L^4$ at infinity.

Using that $|T_\e|\geq 1$, we obtain that 
$$|\na v^\e| \leq \frac{\a}{2\pi\l}\Bigl|\F'\Bigl(\frac{|T_\e(x)|-1}{\l}\Bigl)\Bigl| |DT_\e|^2 + \frac{3\a}{2\pi}\F^{\e,\l}(x) \Bigl(\frac{|D^2T_\e|}{|T_\e(x)|}+\frac{|DT_\e|^2}{|T_\e(x)|^2}\Bigl).$$
Taking into account that the radii of the annulus where $\F^{\e,\l}$ is not constant can be made independent of $\e$, Assumption \ref{assump} (iv) implies that the first term in above inequality is uniformly bounded with respect to $x$ and $\e$, and compactly supported in a compact independent of $\e$. Parts (i), (iv) and (v) of Assumption \ref{assump} allow us to state that, for sufficiently large $\l$, the second term is bounded by $C\F^{\e,\l}(x)/|x|^2$ (with a constant $C$ independent of $\e$), which belongs to $L^2(\R^2)$. This proves the second assertion of the lemma.

Finally, we remark that $\D H^\e=0$ outside the balls where $\F^{\l,\e}$ vanish, because $H^\e=\na^\perp \ln |T_\e(x)|=\na^\perp \Re (\ln T_\e(x))$, with $\ln T_\e$ an holomorphic function, so $\D\ln T_\e =0$. Then, since $|T_\e(x)|\geq 1$, for some constant $C>0$ we have
\begin{equation*}\begin{split}
|\D v^\e|\leq C\Bigl|\F'\Bigl(\frac{|T_\e(x)|-1}{\l}\Bigl)\Bigl| ( |DT_\e|^3+ |D&T_\e| |D^2T_\e|) \\
&+C\Bigl|\F''\Bigl(\frac{|T_\e(x)|-1}{\l}\Bigl)\Bigl| |DT_\e|^3
\end{split}\end{equation*}
which is bounded in $L^\infty(\R^2)$ uniformly with respect to $\e$, and compactly supported in a compact independent of $\e$.
\end{proof}

\begin{lemma} \label{W_0_est} We have that $W^\e_0\equiv W^\e(.,0) = K_\e[\o_0]+\a(1-\F^{\e,\l})H_\e$ is bounded in $L^p$ independently of $\e$, for $1<p\leq 3$.
\end{lemma}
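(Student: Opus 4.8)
The plan is to substitute the explicit form of the initial data and reduce the claim to the two building blocks $K^\e[\o_0]$ and $H^\e$, each of which is already controlled by Assumption \ref{assump} and the far-field bound (\ref{H_inf}). Starting from (\ref{u_0}), namely $u_0^\e=K^\e[\o_0]+\a H^\e$, and from the definition $v^\e=\a\F^{\e,\l}H^\e$, one obtains at once
\begin{equation*}
W^\e_0=u_0^\e-v^\e=K^\e[\o_0]+\a(1-\F^{\e,\l})H^\e,
\end{equation*}
which is the announced decomposition. I would then bound the two summands separately, the whole point being that the slowly decaying harmonic tail $\a H^\e\sim 1/|x|$ (which is only $L^p$ at infinity for $p>2$) has been multiplied by the compactly supported factor $1-\F^{\e,\l}$, so that the behaviour of $W^\e_0$ at infinity is now dictated solely by $K^\e[\o_0]$.

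For the second summand, first observe that $1-\F^{\e,\l}$ is supported in a ball $B(0,C_2\l)$ whose radius is independent of $\e$ (as recorded just after (\ref{phi})); hence $\a(1-\F^{\e,\l})H^\e$ is compactly supported uniformly in $\e$ and there is no far-field issue. On this fixed ball, the explicit formula (\ref{H}) together with $|T_\e|\geq 1$ yields the pointwise bound $|H^\e|\leq C|DT_\e|$, and Remark \ref{remark_assump} guarantees that $DT_\e$ is bounded in $L^p(B(0,R)\cap\Pi_\e)$ uniformly in $\e$ for every $p\leq 3$. The only singularities of $DT_\e$ are at the two endpoints of $\G$, where it blows up like the inverse square root of the distance and is therefore still locally $L^p$ for $p<4$; thus the cap at $p\leq 3$ comes from the uniform bound of Remark \ref{remark_assump} rather than from these endpoints. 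This disposes of the second term on the whole range $1<p\leq 3$.

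For the first summand I would split $\R^2$ into a large fixed ball $B(0,R)$ and its complement. On $B(0,R)$ the Biot--Savart estimate of Theorem 4.4 of \cite{lac_euler}, applied to $K^\e[\o_0]$ exactly as in the proof of Lemma \ref{u_0_est}, bounds $\|K^\e[\o_0]\|_{L^p(B(0,R))}$ by $C\|E DT_\e\|_{L^p(B(0,R))}$, again uniformly in $\e$ for $p\leq 3$ by Remark \ref{remark_assump}. Outside $B(0,R)$ I would invoke the far-field bound (\ref{H_inf}), $|K^\e[\o_0](x)|\leq C/|x|^2$, and use that $1/|x|^2$ is $L^p$ at infinity in $\R^2$ precisely for $p>1$. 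Combining the two regions gives a uniform $L^p(\R^2)$ bound for $K^\e[\o_0]$ on the whole range $1<p\leq 3$, and adding the estimate for the second summand completes the proof.

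The argument is essentially bookkeeping once the decomposition is in place; the one genuinely important point --- and the reason the admissible range now reaches below $p=2$ --- is the improvement at infinity obtained by subtracting $v^\e$, so that the tail of $W^\e_0$ decays like $1/|x|^2$ instead of $1/|x|$. The only care needed is to keep every constant independent of $\e$, which is exactly what Assumption \ref{assump} and the $\e$-independence of the cutoff radii noted after (\ref{phi}) provide.
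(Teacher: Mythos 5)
Your proof is correct and follows essentially the same route as the paper: the paper's own argument simply says to proceed as in Lemma \ref{u_0_est}, with the improvement that $W^\e_0$ now decays like $1/|x|^2$ at infinity (so the range extends down to $p>1$), which is exactly the decomposition and two-region estimate you carry out in detail. Your version just makes explicit the pointwise bound $|H^\e|\leq C|DT_\e|$ and the uniform compact support of $1-\F^{\e,\l}$, both of which are consistent with the paper.
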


\begin{proof} This lemma can be established as in Lemma \ref{u_0_est} using that $W^\e_0$ behaves like $1/|x|^2$ at infinity (see (\ref{H_inf})), which belongs to $L^p$ for $p>1$.
\end{proof}
In particular, $W^\e_0$ is bounded in $L^2$, which will be useful in getting {\it a priori} estimates for $W^\e\equiv u^\e-v^\e$.

\begin{lemma}\label{est} The vector fields $W^\e$ are bounded independently of $\e$ in $L^{\infty}_{\loc}([0,\infty);L^2(\Pi_\e))\cap L^2_{\loc}([0,\infty);H^1(\Pi_\e))$.
\end{lemma}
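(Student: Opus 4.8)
The plan is to derive uniform-in-$\e$ energy estimates for $W^\e = u^\e - v^\e$ by testing the equation it satisfies against $W^\e$ itself. First I would compute the equation for $W^\e$: subtracting the evolution $\pd_t v^\e$ (which vanishes since $v^\e$ is time-independent) and using $u^\e = W^\e + v^\e$ in the Navier--Stokes system (\ref{NS}), one obtains a problem of the form
\begin{equation*}
\pd_t W^\e - \nu\D W^\e + (W^\e+v^\e)\cdot\na(W^\e+v^\e) = -\na p^\e + \nu\D v^\e,
\end{equation*}
on $\Pi_\e$, with $\diver W^\e = 0$, the no-slip condition $W^\e=-v^\e=0$ on $\G_\e$ (here using that $\F^{\e,\l}$ vanishes near $\G_\e$, so $v^\e|_{\G_\e}=0$ and indeed $W^\e|_{\G_\e}=0$), and initial data $W^\e_0$ controlled in $L^2$ by Lemma \ref{W_0_est}. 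The forcing term $\nu\D v^\e$ is bounded in $L^\infty$ with compact support independent of $\e$ by Lemma \ref{v_est}(c), hence bounded in $L^2(\Pi_\e)$ uniformly.

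Next I would multiply by $W^\e$ and integrate over $\Pi_\e$. The pressure term drops out since $\diver W^\e=0$ and $W^\e$ vanishes on the boundary. The viscous term yields $\nu\|\na W^\e\|_{L^2}^2$ after integration by parts, giving the dissipation that controls the $L^2_{\loc}(H^1)$ norm. The nonlinearity splits into several pieces: the genuinely quadratic transport term $\int (W^\e\cdot\na W^\e)\cdot W^\e$ vanishes by the standard divergence-free cancellation, while the remaining trilinear terms involving $v^\e$ must be estimated. The key contributions to bound are $\int (W^\e\cdot\na v^\e)\cdot W^\e$, $\int (v^\e\cdot\na W^\e)\cdot W^\e$, and $\int (v^\e\cdot\na v^\e)\cdot W^\e$. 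For these I would invoke Lemma \ref{v_est}: $v^\e$ is bounded in $L^4$, $\na v^\e$ is bounded in $L^2$, and $\D v^\e$ (hence the source) is bounded in $L^\infty$ with fixed compact support. Using the Ladyzhenskaya--Gagliardo--Nirenberg inequality $\|W^\e\|_{L^4}^2 \le C\|W^\e\|_{L^2}\|\na W^\e\|_{L^2}$ in two dimensions, together with Hölder and Young's inequality, I would absorb the $\|\na W^\e\|_{L^2}^2$ contributions into the viscous dissipation while leaving a bound of the form $\tfrac{d}{dt}\|W^\e\|_{L^2}^2 + \nu\|\na W^\e\|_{L^2}^2 \le C(1+\|W^\e\|_{L^2}^2)$, with $C$ independent of $\e$.

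Finally, Grönwall's inequality applied on any finite interval $[0,T]$ yields the uniform bound on $\|W^\e\|_{L^\infty_{\loc}(L^2)}$, and integrating the differential inequality then gives the uniform bound on $\|\na W^\e\|_{L^2_{\loc}(L^2)}$, which combined with the $L^2$ control upgrades to $L^2_{\loc}(H^1)$. Throughout I would be careful that all constants depend only on $\nu$, $T$, the fixed parameter $\l$, and the uniform bounds from Lemmas \ref{v_est} and \ref{W_0_est}, never on $\e$. The main obstacle I anticipate is the treatment of the trilinear term $\int (v^\e\cdot\na W^\e)\cdot W^\e$: naively this has a derivative on $W^\e$ that one wants to move off, but integration by parts produces $-\int(v^\e\cdot\na W^\e)\cdot W^\e - \int(\diver v^\e)|W^\e|^2$, and since $v^\e=\a\F^{\e,\l}H^\e$ is \emph{not} exactly divergence-free (the cutoff $\F^{\e,\l}$ spoils this), one must control $\diver v^\e = \a\na\F^{\e,\l}\cdot H^\e$, which is supported in the fixed annulus where $\F^{\e,\l}$ transitions and is bounded there uniformly in $\e$; handling this boundary-layer term carefully, and likewise ensuring the Ladyzhenskaya estimate is applied so that the cutoff-supported terms do not degrade the uniformity, is where the real care is required.
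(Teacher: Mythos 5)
Your proposal follows essentially the same route as the paper: derive the equation for $W^\e$, test against $W^\e$, kill the pressure and the cubic transport term, estimate the remaining trilinear terms via H\"older and the Ladyzhenskaya inequality $\|W^\e\|_{L^4}\le C\|W^\e\|_{L^2}^{1/2}\|\na W^\e\|_{L^2}^{1/2}$ with an $\e$-independent constant (obtained, as in the paper, by extending $W^\e$ by zero into $\O_\e$, which preserves the $H^1$ norm thanks to the no-slip condition), absorb the dissipative contributions by Young, and close with Gronwall using the uniform bounds of Lemmas \ref{v_est} and \ref{W_0_est}.

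One point needs correcting, namely the ``main obstacle'' you anticipate at the end. You assert both that the pressure term drops out because $\diver W^\e=0$ and that $v^\e=\a\F^{\e,\l}H^\e$ is \emph{not} exactly divergence-free; since $\diver W^\e=\diver u^\e-\diver v^\e=-\diver v^\e$, these two claims are incompatible. If $\diver v^\e$ were genuinely nonzero, the issue would be more serious than the boundary-layer term $-\tfrac12\int(\diver v^\e)|W^\e|^2$ you propose to control (which is indeed harmless): the pressure would no longer disappear from the energy identity, since $\int_{\Pi_\e}\na p^\e\cdot W^\e=-\int_{\Pi_\e}p^\e\,\diver W^\e$ after integration by parts, and there is no uniform estimate on $p^\e$ available. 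The resolution, which the paper makes explicit, is that $\diver v^\e=\a\,H^\e\cdot\na\F^{\e,\l}=0$ identically: $H^\e=\na^\perp\ln|T_\e|$ is tangent to the level sets of $|T_\e|$, while $\na\F^{\e,\l}$ is parallel to $\na|T_\e|$ and hence normal to them, so their scalar product vanishes pointwise (in coordinates, $(T_\e/|T_\e|^2\,DT_\e)^\perp\cdot(T_\e/|T_\e|\,DT_\e)=0$). Thus the cutoff does not spoil incompressibility, $\diver W^\e=0$ holds exactly, the pressure term cancels as you claimed, and the extra term you were worried about is identically zero.
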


\begin{proof} We rewrite (\ref{NS})  for $W^\e$ as follows
\begin{equation*}
\left\lbrace \begin{aligned}
& \begin{split} \pd_t W^\e-\n\D W^\e-\n\D v^\e+(W^\e+v^\e)\cdot \na W^\e&+W^\e\cdot \na v^\e+v^\e\cdot \na v^\e \\ &=-\na p^\e \text{ in } \Pi_\e\times (0,\infty)\end{split}\\
& \diver W^\e=0 \text{\ \ \ \  in } \Pi_\e\times [0,\infty)\\
& W^\e(\cdot,t)=0 \text{\ \ \ \  on }  \G_\e \times (0,\infty)
\end{aligned} \right .
\end{equation*}
Indeed, $\diver W^\e=-\diver v^\e=\a H^\e\cdot \na \F^{\e,\l}=-\frac{\a}{2\pi\l}(T_\e/|T_\e|^2 DT_\e)^\perp \cdot \F'(\frac{|T_\e|-1}{\l})(T_\e/|T_\e| DT_\e)=0$.
We multiply the equation above by $W^\e$ and integrate to obtain
\begin{eqnarray*}
\mathcal{E} &\equiv &\frac{1}{2}\frac{d}{dt}\|W^\e\|^2_{L^2}+\n\|\na W^\e\|^2_{L^2} \\
&=& -\int_{\Pi_\e} [W^\e\cdot (W^\e\cdot \na v^\e)+W^\e\cdot (v^\e\cdot \na v^\e)]dx+\n \int_{\Pi_\e} W^\e \cdot \D v^\e dx \\
&=& \int_{\Pi_\e} [v^\e\cdot (W^\e\cdot \na W^\e)+v^\e\cdot (v^\e\cdot \na W^\e)]dx+ \n \int_{\Pi_\e} W^\e \cdot \D v^\e dx\\
&\leq& \|W^\e\|_{L^4}\|\na W^\e\|_{L^2}\|v^\e\|_{L^4}+\|\na W^\e\|_{L^2}\|v^\e\|_{L^4}^2+\n \| W^\e\|_{L^2}\|\D v^\e\|_{L^2}.
\end{eqnarray*}
Next, we will use the interpolation inequality: $$\|W^\e\|_{L^4}\leq C \|W^\e\|_{L^2}^{1/2}\|\na W^\e\|_{L^2}^{1/2},$$ with a constant $C>0$ independent of $\e$. This inequality in the case of $\R^2$ can be found in Chapter 1 of \cite{interp}. To obtain the corresponding inequality in $\Pi_\e$, one simply extends $W^\e$ to $\R^2$ by setting it identically zero inside $\O_\e$. As $W^\e$ vanishes on $\G_\e$, the extension has $H^1$-norm in the plane identical to the $H^1$ norm of $W^\e$ in $\Pi_\e$. Moreover, $\D v^\e$ is bounded in $L^2$ and $v^\e$ is uniformly bounded in $L^4$ independently of $\e$ thanks to Lemma \ref{v_est}. Hence,
\begin{eqnarray*}
\mathcal{E} &\leq& C\|W^\e\|_{L^2}^{1/2}\|\na W^\e\|_{L^2}^{3/2}\|v^\e\|_{L^4}+\|\na W^\e\|_{L^2}\|v^\e\|_{L^4}^2+\n \| W^\e\|_{L^2}\|\D v^\e\|_{L^2}\\
&\leq& \frac{\n}{2}\|\na W^\e\|_{L^2}^2+C_1\|W^\e\|_{L^2}^2+C_2,
\end{eqnarray*}
for some constants $C_1$ and $C_2$ independent of $\e$, so 
$$\frac{d}{dt}\|W^\e\|^2_{L^2}+\n\|\na W^\e\|^2_{L^2}\leq 2C_1\|W^\e\|_{L^2}^2+2C_2.$$ 
Gronwall's inequality now gives, for any $t>0$,
\begin{equation}
\label{est_energie_1}
e^{-2C_1t}\|W^\e\|^2_{L^2}+\n\int_0^t e^{-2C_1s}\|\na W^\e(s,\cdot)\|^2_{L^2}ds\leq \frac{C_2}{C_1}+\|W^\e(0,\cdot)\|^2_{L^2}.
\end{equation}
 
Using the fact that $W^\e(0,\cdot)$ are bounded in $L^2$ independently of $\e$ (see Lemma \ref{W_0_est}), we can rewrite (\ref{est_energie_1}) as
 \begin{equation}
\label{est_energie}
\|W^\e\|^2_{L^2(\Pi_\e)}+\n e^{2C_1t}\int_0^t e^{-2C_1s}\|\na W^\e(s,\cdot)\|^2_{L^2(\Pi_\e)}ds\leq e^{2C_1t}C,
\end{equation}
with a constant $C$. This completes the proof.
\end{proof}

We now deduce the main result of this section.

\begin{theorem} \label{u_est}
Let $u^\e$ be the solution of (\ref{NS}), then the following hold true.
\begin{itemize}
\item[1.] The family $\{Eu^\e-v^\e\}$ is bounded in $L^\infty_{\loc} ((0,\infty);L^2(\R^2))\cap L^2_{\loc} ([0,\infty);H^1(\R^2))$.
\item[2.] The family $\{\na Eu^\e\}$ is bounded in $L^2_{\loc} ([0,\infty);L^2(\R^2))$.
\item[3.] The family $\{Eu^\e\}$ is bounded in \newline $L^\infty_{\loc} ((0,\infty);L^2_{\loc}(\R^2))\cap L^4_{\loc}([0,\infty);L^4(\R^2))$.
\end{itemize}
\end{theorem}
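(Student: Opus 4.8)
The plan is to reduce all three statements to the uniform bounds on $W^\e$ already obtained in Lemma \ref{est}, combined with the bounds on $v^\e$ from Lemma \ref{v_est}. The key preliminary observation is the identity $Eu^\e-v^\e=EW^\e$ on all of $\R^2$. Indeed, since $\l$ was chosen so that the ball on which $\F^{\e,\l}$ vanishes contains $\overline{\O_\e}$, we have $v^\e=\a H^\e\F^{\e,\l}\equiv 0$ on $\overline{\O_\e}$; hence on $\Pi_\e$ one has $Eu^\e-v^\e=u^\e-v^\e=W^\e$, while on $\O_\e$ both $Eu^\e$ and $v^\e$ vanish, matching $EW^\e=0$. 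Moreover, because $W^\e$ vanishes on $\G_\e$, its zero-extension satisfies $\na EW^\e=E\na W^\e$, so that $\|EW^\e\|_{L^2(\R^2)}=\|W^\e\|_{L^2(\Pi_\e)}$ and $\|\na EW^\e\|_{L^2(\R^2)}=\|\na W^\e\|_{L^2(\Pi_\e)}$.

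With this identity, Part~1 is immediate: the norms of $Eu^\e-v^\e=EW^\e$ in $L^2(\R^2)$ and in $H^1(\R^2)$ coincide with the corresponding norms of $W^\e$ on $\Pi_\e$, which are controlled uniformly in $\e$ by Lemma \ref{est}. For Part~2 I would write $\na Eu^\e=\na EW^\e+\na v^\e$, using $Eu^\e=EW^\e+v^\e$. The first term is bounded in $L^2_{\loc}([0,\infty);L^2(\R^2))$ by Part~1, and the second is bounded in $L^2(\R^2)$ uniformly in $\e$ (and is independent of time) by Lemma \ref{v_est}(b); adding the two gives the claim.

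For Part~3 I would again use the decomposition $Eu^\e=EW^\e+v^\e$. The bound in $L^\infty_{\loc}((0,\infty);L^2_{\loc}(\R^2))$ is straightforward: $EW^\e$ is already bounded in $L^\infty_{\loc}((0,\infty);L^2(\R^2))$ by Part~1, while $v^\e$, being bounded in $L^4(\R^2)$ by Lemma \ref{v_est}(a), is bounded in $L^2$ of any fixed ball and is time-independent. The genuinely new ingredient, and the step I expect to require the most care, is the space-time $L^4$ bound. Here I would apply the two-dimensional interpolation (Ladyzhenskaya) inequality $\|EW^\e\|_{L^4(\R^2)}\leq C\|EW^\e\|_{L^2(\R^2)}^{1/2}\|\na EW^\e\|_{L^2(\R^2)}^{1/2}$, the same inequality used in the proof of Lemma \ref{est}, raise it to the fourth power and integrate in time to obtain $\int_0^T\|EW^\e\|_{L^4}^4\,dt\leq C\,\|EW^\e\|_{L^\infty((0,T);L^2)}^2\int_0^T\|\na EW^\e\|_{L^2}^2\,dt$. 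Both factors on the right are bounded uniformly in $\e$ by Part~1, so $EW^\e$ is bounded in $L^4_{\loc}([0,\infty);L^4(\R^2))$; since $v^\e$ is bounded in $L^4(\R^2)$ uniformly and is time-independent, it lies in the same space, and the sum $Eu^\e$ is bounded there as well. The only points demanding attention are the vanishing of $v^\e$ on $\overline{\O_\e}$, which underlies the extension identity, and the validity of the interpolation inequality for the planar zero-extension, which holds precisely because $W^\e$ vanishes on $\G_\e$.
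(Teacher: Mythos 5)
Your proposal is correct and follows essentially the same route as the paper: both reduce the theorem to Lemma \ref{est} for $W^\e$ and Lemma \ref{v_est} for $v^\e$, and both obtain the space-time $L^4$ bound in part 3 via the Ladyzhenskaya interpolation inequality applied to $EW^\e$. Your write-up is merely more explicit about the identity $Eu^\e-v^\e=EW^\e$ and the preservation of norms under zero-extension, points the paper leaves implicit.
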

\begin{proof} The proof is based on Lemmas \ref{v_est} and \ref{est}. Indeed, part 1. follows from Lemma \ref{est}, while part 2. is a consequence of the same lemma and of Lemma \ref{v_est} (b). To prove part 3., we use again the interpolation inequality $\|W^\e\|_{L^4(L^4)} \leq C\|W^\e\|_{L^\infty(L^2)}^{1/2}\|\na W^\e\|_{L^2(L^2)}^{1/2}$ which ensures that $W^\e$ is uniformly bounded in $L^4_{\loc}([0,\infty);L^4(\R^2))$. It suffices now to use Lemma \ref{v_est} (a), which give the uniform boundedness in $L^4_{\loc}([0,\infty);L^4(\R^2))$ for $u^\e$ (whereas $Eu^\e_0$ is not uniformly bounded in $L^4_{\loc}(\R^2)$).
\end{proof}

For each $\e>0$, we know that $\diver E W^\e = \diver E u^\e=0$ on $\R^2$. Moreover, since the supports of  $E W^\e$ and $E u^\e$ are contained in $\Pi_\e$, we can transpose the previous theorem with the functional spaces of Definition \ref{spaces}.

\begin{corollary} \label{u_est_2}
Let $u^\e$ be the solution of (\ref{NS}), then the following hold true.
\begin{itemize}
\item[1.] The family $\{Eu^\e-v^\e\}$ is bounded in \newline $L^\infty_{\loc} ((0,\infty);\Hc_\G) \cap L^2_{\loc} ([0,\infty);\Vc_\G)$.
\item[2.] The family $\{\na Eu^\e\}$ is bounded in $L^2_{\loc} ([0,\infty);\Hc_\G)$.
\end{itemize}
\end{corollary}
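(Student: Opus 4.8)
The plan is to obtain Corollary~\ref{u_est_2} from Theorem~\ref{u_est} by the transposition announced just above it: all the quantities at issue are divergence-free, supported inside $\Pi_\e$, and vanish on $\G_\e$, so extension by zero identifies them \emph{isometrically} with elements of the divergence-free spaces $\Hc_\G$ and $\Vc_\G$ attached to $\R^2\setminus\G$. Thus the only thing to verify is that the uniform $L^2$- and $H^1$-norms already controlled in Theorem~\ref{u_est} are the correct norms in these spaces.

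First I would record that, by the choice of $\l$ in the definition $v^\e=\a H^\e\F^{\e,\l}$, the balls where $\F^{\e,\l}$ vanishes contain $\overline{\O_\e}$; hence $v^\e\equiv 0$ on a neighbourhood of $\overline{\O_\e}$ and $v^\e$ coincides with its own extension by zero. Consequently $Eu^\e-v^\e=E(u^\e-v^\e)=EW^\e$ pointwise on $\R^2$, and it suffices to treat $EW^\e$. Next, for a.e.\ fixed $t$, the field $W^\e(t,\cdot)$ is divergence-free, lies in $H^1(\Pi_\e)$ by Lemma~\ref{est}, and vanishes on $\G_\e=\pd\Pi_\e$ (the no-slip condition for $u^\e$ together with $v^\e\equiv0$ near $\G_\e$). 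Since $\O_\e$ is smooth, the closure $\Vc(\Pi_\e)$ of $V(\Pi_\e)$ in $H^1$ coincides with the divergence-free fields of $H^1_0(\Pi_\e)$; therefore $W^\e(t,\cdot)\in\Vc(\Pi_\e)$, and in particular $W^\e(t,\cdot)\in\Hc(\Pi_\e)$.

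The decisive point is that extension by zero maps $\Vc(\Pi_\e)$ isometrically into $\Vc_\G$ and $\Hc(\Pi_\e)$ isometrically into $\Hc_\G$. This holds because $\G\subset\O_\e$ forces $\Pi_\e=\R^2\setminus\overline{\O_\e}\subset\R^2\setminus\G$, so any $\f\in V(\Pi_\e)$ extends by zero to a smooth, compactly supported, divergence-free field on $\R^2\setminus\G$, i.e.\ $E\f\in V(\R^2\setminus\G)$, and $E$ preserves both the $L^2$ and the $H^1$ norm; passing to closures yields the two embeddings. With them in hand, $\|EW^\e(t,\cdot)\|_{\Hc_\G}=\|W^\e(t,\cdot)\|_{L^2(\Pi_\e)}$ and $\|EW^\e(t,\cdot)\|_{\Vc_\G}=\|W^\e(t,\cdot)\|_{H^1(\Pi_\e)}$, so the uniform bounds of Lemma~\ref{est} (equivalently Theorem~\ref{u_est}, part~1) transfer verbatim and give part~1. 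For part~2, since $u^\e$ vanishes on $\G_\e$ one has $\na Eu^\e=E\na u^\e$, which is supported in $\Pi_\e\subset\R^2\setminus\G$; the same support-based identification upgrades the $L^2_{\loc}([0,\infty);L^2(\R^2))$ bound of Theorem~\ref{u_est}, part~2, to the stated bound over $\R^2\setminus\G$.

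The main obstacle — the only step that is more than bookkeeping — is the pair of facts underlying the third paragraph: the identification $\Vc(\Pi_\e)=\{v\in H^1_0(\Pi_\e):\diver v=0\}$ and the fact that extension by zero is norm-preserving into the spaces based on $\R^2\setminus\G$. The former relies on the smoothness of $\O_\e$, which guarantees that every divergence-free $H^1_0$ field on $\Pi_\e$ is an $H^1$-limit of fields in $V(\Pi_\e)$; the latter rests on the strict inclusion $\G\subset\O_\e$, which leaves a collar separating $\supp EW^\e$ from the curve and thereby ensures that no mass or regularity is lost across $\G$ when one regards these fields as living on $\R^2\setminus\G$.
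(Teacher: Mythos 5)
Your proposal is correct and follows essentially the same route as the paper, which disposes of this corollary with the one-line remark that $EW^\e$ and $Eu^\e$ are divergence-free and supported in $\Pi_\e\subset\R^2\setminus\G$, so Theorem \ref{u_est} can be ``transposed'' to the spaces $\Hc_\G$ and $\Vc_\G$. You have merely made explicit the bookkeeping the paper leaves implicit (the isometric extension-by-zero embedding and the identification of $W^\e(t,\cdot)$ as an element of $\Vc(\Pi_\e)$), which is a faithful elaboration rather than a different argument.
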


We will later use the following proposition on regularization of functions in  $L^2_{\loc} ([0,\infty);\Vc_\G)$.

\begin{proposition}\label{approx} Let $T\in [0,+\infty)$ and $f\in L^2 ([0,T];\Vc_\G)$. There exists a sequence $\{f_n\}$ of divergence-free functions belonging to $C^\infty_c((0,T)\times (\R^2\setminus \G))$ such that $f_n\to f$ in $L^2([0,T],\Vc_\G)$.
\end{proposition}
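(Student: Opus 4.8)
The plan is to approximate $f$ in three successive steps, each producing an arbitrarily small error in the norm of $L^2([0,T];\Vc_\G)$, and then to conclude by a diagonal argument. The guiding idea is to exploit the fact that $\Vc_\G$ is, \emph{by definition}, the $H^1$-closure of $V(\R^2\setminus\G)$, so that all the spatial density is already built into the space and the geometry of $\G$ never has to be handled explicitly. First I would secure compact support in time: let $\theta_\delta\in C^\infty(\R)$ satisfy $0\le\theta_\delta\le1$, $\theta_\delta\equiv1$ on $[\delta,T-\delta]$ and $\theta_\delta\equiv0$ outside $[\delta/2,T-\delta/2]$. Since
\[
\|\theta_\delta f-f\|_{L^2([0,T];\Vc_\G)}^2=\int_0^T|1-\theta_\delta(t)|^2\,\|f(t)\|_{\Vc_\G}^2\,dt,
\]
dominated convergence gives $\theta_\delta f\to f$ as $\delta\to0$, and $\theta_\delta f$ is supported in $[\delta/2,T-\delta/2]$ while still taking values in $\Vc_\G$.

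Next I would recover smoothness in the time variable by mollification. Choosing a standard mollifier $\rho_\eta$ with $\eta<\delta/4$, set $g:=\rho_\eta*_t(\theta_\delta f)$. This is a Bochner integral of $\Vc_\G$-valued functions, so $g\in C^\infty([0,T];\Vc_\G)$, it is supported in a compact subset of $(0,T)$, and $g\to\theta_\delta f$ in $L^2([0,T];\Vc_\G)$ as $\eta\to0$ by the usual mollification estimate in Bochner spaces. Both the cutoff and the mollification are averaging operations in $t$ alone, hence they preserve membership in $\Vc_\G$ and the divergence-free condition in $x$.

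The essential step is then to pass from $\Vc_\G$-valued smoothness to genuine joint smoothness with compact spatial support, via a partition of unity in time. Now $g\colon[0,T]\to\Vc_\G$ is continuous with compact support $K\subset(0,T)$, so $g([0,T])$ is compact in $\Vc_\G$. Fixing $\delta'>0$, for each $t\in K$ I would use the density of $V(\R^2\setminus\G)$ in $\Vc_\G$ to pick $\psi_t\in V(\R^2\setminus\G)$ with $\|g(t)-\psi_t\|_{\Vc_\G}<\delta'/2$, and by continuity of $g$ obtain a relatively compact neighbourhood $U_t\Subset(0,T)$ on which $\|g(s)-\psi_t\|_{\Vc_\G}<\delta'$. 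Extracting a finite subcover $U_1,\dots,U_m$ of $K$ with associated fields $\psi_1,\dots,\psi_m$, and adjoining $U_0:=(0,T)\setminus K$ with $\psi_0:=0$, I take a smooth partition of unity $\{\chi_j\}_{j=0}^m$ subordinate to $\{U_j\}$ with $\sum_j\chi_j\equiv1$ on $(0,T)$ and $\chi_j\in C^\infty_c((0,T))$ for $j\ge1$. Setting $h(t,x):=\sum_{j=1}^m\chi_j(t)\psi_j(x)$, one checks that $h\in C^\infty_c((0,T)\times(\R^2\setminus\G))$ is divergence-free in $x$; and since $g$ vanishes on $\supp\chi_0$,
\[
\|g(t)-h(t)\|_{\Vc_\G}\le\sum_{j=1}^m\chi_j(t)\,\|g(t)-\psi_j\|_{\Vc_\G}\le\delta'\qquad\text{for all }t,
\]
so that $\|g-h\|_{L^2([0,T];\Vc_\G)}\le\delta'\sqrt T$.

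Combining the three steps and choosing $\delta,\eta,\delta'$ small enough (depending on $n$) along a diagonal sequence yields the required $\{f_n\}$. The genuinely delicate point is the last step: one cannot simply mollify in the spatial variable, because spatial mollification would in general destroy either the divergence-free constraint or the compact support away from the curve $\G$. The partition-of-unity argument circumvents this by offloading the spatial approximation entirely onto the abstract density of $V(\R^2\setminus\G)$ in $\Vc_\G$, with the smooth weights $\chi_j(t)$ supplying the joint space-time regularity; and because each $\psi_j$ is itself divergence-free, every $f_n$ is divergence-free as required.
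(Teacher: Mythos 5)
Your proof is correct, but the essential step is carried out by a genuinely different mechanism than in the paper. Both arguments share the first stage (cut off in time, then mollify in $t$ as a Bochner convolution, which preserves values in $\Vc_\G$ and hence the divergence-free constraint), and both ultimately rest on the same definitional fact that $V(\R^2\setminus\G)$ is dense in $\Vc_\G$. Where you diverge is in upgrading a smooth compactly supported $\Vc_\G$-valued curve to an element of $C^\infty_c((0,T)\times(\R^2\setminus\G))$: the paper exploits the Hilbert structure, building (via Gram--Schmidt applied to a countable dense family taken from $V(\R^2\setminus\G)$) an orthonormal basis $\{\tilde e_n\}\subset V(\R^2\setminus\G)$ of $\Vc_\G$, expanding $g=\sum\a_n(t)\tilde e_n(x)$ with $\a_n\in C^\infty_c((0,T))$, truncating, and invoking Beppo Levi for the $L^2$-in-time convergence of the monotone tail $\sum_{k>N}|\a_k(t)|^2$. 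You instead use that $g$ has compact range in $\Vc_\G$, cover that range by $\delta'$-balls centred at elements of $V(\R^2\setminus\G)$, and glue with a finite partition of unity in time, obtaining a uniform-in-$t$ bound $\|g(t)-h(t)\|_{\Vc_\G}\le\delta'$. Your route is more elementary and more robust: it needs neither the inner product nor separability, only density of the nice class and continuity of $g$, and it sidesteps the (harmless but real) orthogonality argument the paper needs to make the tail monotone. The paper's route buys a single universal basis valid for every $f$ at once and approximants in separated-variables form, which is occasionally convenient downstream. Either way the resulting $f_n$ are finite sums $\sum_j\chi_j(t)\psi_j(x)$ with $\psi_j\in V(\R^2\setminus\G)$, hence divergence-free and compactly supported in $(0,T)\times(\R^2\setminus\G)$ as required, so I see no gap.
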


\begin{proof} In order to find this family, we start by regularizing in time as done in \cite{temam}. To this end, we multiply $f$ by the characteristic function $\chi_{[1/n,T-1/n]}$ and then regularize by a function $\r_n(t)$ such that the size of the support of $\r_n$ is less or equal than $1/(2n)$. Therefore we obtain a family $\{\r_n * (\chi_{[1/n,T-1/n]} f)\}$ which belongs to  $C^\infty_c((0,T), \Vc_\G)$ and which tends to $f$ in $L^2([0,T],\Vc_\G)$. Now, we will approach functions in $C^\infty_c(\Vc_\G)$ by divergence-free functions in $C^\infty_c((0,T)\times (\R^2\setminus \G))$, which will allow us to conclude thanks to a diagonal extraction of a subsequence.

As $\Vc_\G$ is a separable Hilbert space for the scalar product $H^1(\R^2)$, $\Vc_\G$ admits an orthonormal base $\{e_n\}$. Let $\f_{n,m}\in V(\R^2\setminus \G)$ be a sequence tending to $e_n$ in $H^1(\R^2)$ as $m\to \infty$. Clearly, the family $\{ \f_{n,m}\}$ is countable, and the vector space generated by this family is dense in $\Vc_\G$. Therefore, by Gram-Schmidt we can conclude that there exists an  orthonormal base $\{\tilde e_n\}$ of $\Vc_\G$ with $\tilde e_n \in V(\R^2\setminus \G)$. So, if $f\in C^\infty_c((0,T);\Vc)$, we can write $f=\sum \a_n(t) \tilde e_n(x)$ with $\a_n\in C^\infty_c((0,T))$, and we can choose
$$f_N=\sum_0^N \a_n(t) \tilde e_n(x).$$
Those functions belong to $C^\infty_c((0,T)\times (\R^2\setminus \G))$. Moreover, $g_n(t)=\|f(\cdot,t)-f_n(\cdot,t)\|^2_{H^1}$ belongs to $L^1([0,T])$ (since $\| g_n \|_{L^1}\leq 4 (\|f\|_{L^2([0,T],H^1)})^2$), and for each $t\in [0,T]$, $\{g_n(t)\}$ is a non-increasing sequence, which tends to zero.  Then by the Beppo Levi theorem, $g_n$ tends to zero in $L^1([0,T])$, which means that $f_n$ converges to $f$ in $L^2([0,T],H^1(\R^2))$.
\end{proof}

\section{Passing to the limit}

In this section, we prove that $\{ Eu^\e \}$ converges to a solution of the Navier-Stokes equations on $\R^2\setminus\G$ in the sense of distributions.  It suffices to find a strong convergence for the sequence $\{ Eu^\e \}$ in $L^2_{\loc}([0,\infty)\times (\R^2\setminus\G))$.

\begin{proposition} \label{com_w} Let $T>0$ and let $O$ be a smooth open set relatively compact in $\R^2\setminus \G$. Then the sequence $\{ Eu^\e \}$ is precompact in $L^\infty((0,T);H^{-3}(O))$.
\end{proposition}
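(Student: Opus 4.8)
The plan is to establish precompactness in $L^\infty((0,T);H^{-3}(O))$ via an Aubin--Lions--Simon type argument, using the uniform spatial bounds from Theorem \ref{u_est} together with a uniform bound on the time derivative $\pd_t E u^\e$ in a negative-order Sobolev space. Since $O$ is relatively compact in $\R^2\setminus\G$, for $\e$ small the obstacle $\O_\e$ does not meet $O$, so on $O$ we simply have $E u^\e = u^\e$ and the Navier--Stokes equations \eqref{NS} hold there in the classical/distributional sense. The key is to produce equicontinuity in time with values in $H^{-3}(O)$, which combined with the uniform $L^2(O)$ (indeed $L^4$) spatial bound and the compact embedding $L^2(O)\hookrightarrow\hookrightarrow H^{-3}(O)$ will yield precompactness in $C([0,T];H^{-3}(O))$, hence in $L^\infty((0,T);H^{-3}(O))$.

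First I would extract the time derivative from the equation. Testing \eqref{NS} against a divergence-free $\f\in C_c^\infty(O)$ and integrating by parts removes the pressure (since $O$ avoids the obstacle and $\f$ is compactly supported), giving
\begin{equation*}
\langle \pd_t u^\e,\f\rangle = \nu\int_O u^\e\cdot\D\f\,dx + \int_O (u^\e\otimes u^\e):\na\f\,dx.
\end{equation*}
Using $\f\in H^3_0(O)$ I estimate each term: the viscous term by $\nu\|u^\e\|_{L^2(O)}\|\D\f\|_{L^2}\le C\|u^\e\|_{L^2(O)}\|\f\|_{H^3_0}$, and the nonlinear term by $\|u^\e\|_{L^4(O)}^2\|\na\f\|_{L^\infty}\le C\|u^\e\|_{L^4(O)}^2\|\f\|_{H^3_0}$, where the $H^3$-regularity of $\f$ is used to control $\na\f$ in $L^\infty$ by Sobolev embedding in two dimensions. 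By Theorem \ref{u_est}, part 3, $u^\e$ is uniformly bounded in $L^\infty_{\loc}((0,\infty);L^2_{\loc})\cap L^4_{\loc}([0,\infty);L^4)$, so $\pd_t u^\e$ is uniformly bounded in $L^2((0,T);H^{-3}(O))$ (after accounting for the divergence-free test functions, one passes to the full dual $H^{-3}$ via the Leray projector, which is bounded on $L^2$). This furnishes the needed time-regularity.

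Next I would combine the two ingredients. The uniform bound of $E u^\e$ in $L^\infty((0,T);L^2(O))$ and of $\pd_t E u^\e$ in $L^2((0,T);H^{-3}(O))$, together with the compact injection $L^2(O)\hookrightarrow\hookrightarrow H^{-3}(O)$, places us exactly in the setting of the Aubin--Lions--Simon lemma, which gives precompactness in $C([0,T];H^{-3}(O))\subset L^\infty((0,T);H^{-3}(O))$. The main obstacle, and the point requiring care, is handling the obstacle and the pressure: one must verify that for $\e$ small enough $\O_\e\cap\bar O=\emptyset$ (guaranteed since $O$ is relatively compact in $\R^2\setminus\G$ and $\O_\e\to\G$), so that the pressure drops out against compactly supported test functions and the extension $E u^\e$ genuinely coincides with $u^\e$ on $O$; once this is secured the argument is standard. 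A secondary subtlety is that the spatial bound in $L^2(O)$ holds on $(0,\infty)$ but may degenerate as $t\to 0^+$ in the $L^\infty$ norm; however, since Theorem \ref{u_est} part 3 gives a uniform $L^\infty_{\loc}((0,\infty);L^2_{\loc})$ bound that is in fact controlled up to $t=0$ through $\|E u^\e-v^\e\|_{L^2}$ plus the $L^4$ bound on $v^\e$, the constant in the final precompactness statement can be taken uniform on $[0,T]$.
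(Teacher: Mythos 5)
Your proposal is correct and follows essentially the same route as the paper: both test the equation against divergence-free fields compactly supported in $O$ (noting that for small $\e$ the obstacle misses $\overline{O}$, so the pressure disappears and $Eu^\e=u^\e$ there), use the bounds of Theorem \ref{u_est} to control the viscous and nonlinear terms, and conclude by a compactness-in-time argument combined with the uniform $L^\infty((0,T);L^2(O))$ bound. The only cosmetic difference is that the paper phrases the time regularity as H\"older-$\frac12$ equicontinuity of $t\mapsto Eu^\e(t)$ into $H^{-2}(O)$ and applies Arzel\`a--Ascoli, whereas you phrase it as an $L^2((0,T);H^{-3}(O))$ bound on $\pd_t u^\e$ and invoke Aubin--Lions--Simon; these are the integrated and differentiated forms of the same estimate.
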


\begin{proof} We show that $\{ Eu^\e\}$ is bounded in $L^\infty((0,T);L^2(O))$ and equicontinuous as a function of $(0,T)$ into $H^{-2}(O)$, which will allow us to apply Arzela-Ascoli's Theorem. Fix $\P$ a smooth divergence-free vector field, compactly supported in $O$. As the obstacle shrinks to the curve $\G$, there exists $\e_O>0$ such that $\O_\e\cap \overline{O} = \emptyset$ for all $0<\e\leq \e_O$. For each interval $(t_1,t_2)\subset (0,T)$, using (\ref{NS}) we see that
\begin{eqnarray*}
\langle Eu^\e(t_2)-Eu^\e(t_1),\P\rangle&=&\int_{\R^2}(Eu^\e(t_2)-Eu^\e(t_1))\P dx\\
&=&\int_{\R^2}\bigl(\int_{t_1}^{t_2} \pd_t Eu^\e dt\bigl)\P dx \\
&=&-\int_{t_1}^{t_2}\int_{\R^2} Eu^\e\cdot \na u^\e\P \,dx\,dt\\
&&-\n \int_{t_1}^{t_2}\int_{\R^2} \na u^\e \na\P \,dx\,dt \\
&\equiv& I_1+I_2.
\end{eqnarray*}
We first estimate $I_1$. Using Theorem \ref{u_est}, we deduce that
\begin{eqnarray*}
|I_1| &\leq& \|Eu^\e\|_{L^\infty((0,T);L^2(O))} \|\na Eu^\e\|_{L^2 ([0,T];L^2(O))}\|\P\|_{L^\infty}\sqrt{|t_2-t_1|}\\
&\leq& C\|\P\|_{H^2}\sqrt{|t_2-t_1|},
\end{eqnarray*}
thanks to the Sobolev embedding $H^2(\R^2) \hookrightarrow  L^\infty(\R^2)$. Next, we treat $I_2$:
$$|I_2|\leq \n \|\na u^\e\|_{L^2([0,T];L^2(O))}\|\na \P\|_{L^2}\sqrt{|t_2-t_1|}\leq C\|\P\|_{H^2}\sqrt{|t_2-t_1|}.$$
The above inequalities show that $\{Eu^\e\}$ is equicontinuous as a function of time into $H^{-2}(O)$.
 
Since $\{ Eu^\e\}$ is bounded in $L^\infty((0,T);L^2(O))$ by Theorem \ref{u_est}, it follows from Arzela-Ascoli's theorem that there is a subsequence of $Eu^\e$ which converges strongly in $L^\infty((0,T);H^{-3}(O))$.
\end{proof}

We now improve the space-time compactness result, which is a direct consequence of the previous proposition.

\begin{lemma} \label{com}
There exists a sequence such that $\{ E u^\e\}$ converges strongly in $L^2_{\loc}([0,\infty)\times (\R^2\setminus\G))$.
\end{lemma}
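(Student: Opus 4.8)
The plan is to upgrade the negative-norm precompactness of Proposition \ref{com_w} to strong $L^2$ convergence in space-time by interpolating against the uniform $H^1$ bound provided by Theorem \ref{u_est}, and then to pass from a fixed space-time box to all of $[0,\infty)\times(\R^2\setminus\G)$ by a diagonal extraction.

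First I would fix $T>0$ and a smooth open set $O$ relatively compact in $\R^2\setminus\G$. On such an $O$ the family $Eu^\e$ is bounded in $L^2((0,T);H^1(O))$: the gradient part comes from Theorem \ref{u_est}.2, while Theorem \ref{u_est}.3 gives $Eu^\e$ bounded in $L^4((0,T);L^4(O))\subset L^2((0,T);L^2(O))$ since $O$ and $(0,T)$ have finite measure. By Proposition \ref{com_w} I may extract a subsequence (not relabelled) converging in $L^\infty((0,T);H^{-3}(O))$, hence in $L^2((0,T);H^{-3}(O))$. The key is then the interpolation inequality
$$\|w\|_{L^2(O)}\leq C\|w\|_{H^1(O)}^{3/4}\|w\|_{H^{-3}(O)}^{1/4},$$
valid on the smooth bounded domain $O$ (the exponent $3/4$ being fixed by $0=\tfrac34\cdot 1+\tfrac14\cdot(-3)$). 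Applying this to the difference $w^\e$ of two elements of the subsequence and using H\"older in time with exponents $4/3$ and $4$, I obtain
$$\int_0^T\|w^\e\|_{L^2(O)}^2\,dt\leq C\Bigl(\int_0^T\|w^\e\|_{H^1(O)}^2\,dt\Bigr)^{3/4}\Bigl(\int_0^T\|w^\e\|_{H^{-3}(O)}^2\,dt\Bigr)^{1/4}.$$
The first factor stays bounded thanks to the uniform $L^2((0,T);H^1(O))$ estimate, while the second tends to zero by the strong convergence in $L^2((0,T);H^{-3}(O))$. Hence $\{Eu^\e\}$ is Cauchy in $L^2((0,T);L^2(O))$ and converges strongly there.

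Finally I would exhaust $\R^2\setminus\G$ by an increasing sequence $O_1\subset O_2\subset\cdots$ of smooth relatively compact open sets and take $T_k\to\infty$. Applying the previous step to each pair $(O_k,T_k)$ and performing a diagonal extraction produces a single subsequence that converges strongly in $L^2((0,T_k);L^2(O_k))$ for every $k$, which is exactly convergence in $L^2_{\loc}([0,\infty)\times(\R^2\setminus\G))$.

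The routine parts here are the uniform bounds (already established) and the diagonal argument. The only genuine point is the interpolation estimate, i.e.\ recognizing $L^2(O)=[H^{-3}(O),H^1(O)]_{3/4}$ and combining it with H\"older in time; this is the mechanism that converts compactness in a very weak norm together with boundedness in a stronger norm into strong convergence in the intermediate space (an Aubin--Lions--Simon type argument carried out explicitly here).
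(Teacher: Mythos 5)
Your proposal is correct and follows essentially the same route as the paper: combine the uniform $L^2((0,T);H^1(O))$ bound from Theorem \ref{u_est} with the precompactness in $L^\infty((0,T);H^{-3}(O))$ from Proposition \ref{com_w}, interpolate to get strong convergence in $L^2((0,T)\times O)$, and conclude by a diagonal extraction over an exhaustion in space and time. The only difference is that you make explicit the interpolation inequality $\|w\|_{L^2(O)}\leq C\|w\|_{H^1(O)}^{3/4}\|w\|_{H^{-3}(O)}^{1/4}$ and the H\"older step in time, which the paper leaves implicit under the phrase ``by interpolation''; both the exponent bookkeeping and the H\"older pairing $(4/3,4)$ check out.
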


\begin{proof}
We know from Theorem \ref{u_est} that $\{ E u^\e\}$ is bounded in \newline $L^2([0,T];H^1(O))$, and Proposition \ref{com_w} states that $\{ E u^\e\}$ is precompact in $L^\infty((0,T);H^{-3}(O))$. It follows by interpolation that there exists a subsequence such that $\{ E u^\e\}$ converges strongly in $L^2([0,T]\times O)$ . By taking diagonal subsequences in the set of the compact subset of $\R^2\setminus \G$ and in the time, we may assume that there is a subsequence which converges strongly in $L^2_{\loc}([0,\infty)\times (\R^2\setminus\G))$.
\end{proof}

We will prove that the limits of the sequence $\{ E u^\e\}$ are solutions of the Navier-Stokes equations on the exterior of a curve in a suitable weak sense. The difficulty is that $Eu^\e$ does not belong to $L^2(\R^2)$. So, as we did in Corollary \ref{u_est_2}, we should keep the harmonic part $v^\e$. Since we previously obtained a limit for $Eu^\e$, now we look for a limit for $v^\e$. We recall that $v^\e = \a H_\e \F^{\e,\l}$, with $H_\e$ and $\F^{\e,\l}$ are given in (\ref{H}) and (\ref{phi}). We also define $H$ and $\F^{0,\l}$ as $H^\e$ and $\F^{\e,\l}$ by replacing $T_\e$ by $T$.

\begin{lemma}\label{v_conv} If we denote $v\equiv \a H \F^{0,\l}$, then $v_\e \to v$ in $L^2_{\loc}(\R^2)$.
\end{lemma}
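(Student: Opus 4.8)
The plan is to prove the convergence on every fixed ball $B(0,R)$, after reducing the matter to a compact annular region on which $T$, $T_\e$ and their derivatives are well controlled. Fix $R>0$. Since the cutoff functions $\F^{\e,\l}$ and $\F^{0,\l}$ both vanish on a common ball $B(0,C_1\l)$ with $C_1$ independent of $\e$ (as recalled after (\ref{phi}), using the uniform convergence $T_\e\to T$), both $v^\e$ and $v$ vanish for $|x|<C_1\l$. Choosing $\l$ large enough that $\G\subset B(0,C_1\l)$ and $\O_\e\subset B(0,C_1\l)$ for all small $\e$, the restriction of $v^\e-v$ to $B(0,R)$ is supported in the compact annulus $A_R\equiv\{C_1\l\leq|x|\leq R\}$, which lies inside $\Pi_\e$ and is bounded away from $\G$. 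On $A_R$ the map $DT$ is continuous, hence bounded, $|T_\e|,|T|\geq 1$ so that $\frac{1}{|T_\e|},\frac{1}{|T|}\leq 1$, and $T(A_R)$ together with $T_\e(A_R)$ (for small $\e$) sits in a fixed compact subset of $\{|z|>1\}$.

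Next I would split the difference by the triangle inequality:
\begin{equation*}
v^\e-v=\a(H^\e-H)\,\F^{\e,\l}+\a H\,(\F^{\e,\l}-\F^{0,\l}).
\end{equation*}
For the first term, using the explicit formulas (\ref{H}) and (\ref{Hbis}) I write
\begin{equation*}
H^\e-H=\frac{1}{2\pi}\Bigl[(DT_\e^t-DT^t)\frac{T_\e^\perp}{|T_\e|^2}+DT^t\Bigl(\frac{T_\e^\perp}{|T_\e|^2}-\frac{T^\perp}{|T|^2}\Bigr)\Bigr].
\end{equation*}
On $A_R$ the factor $\frac{T_\e^\perp}{|T_\e|^2}$ has modulus $\frac{1}{|T_\e|}\leq 1$, so the first piece is bounded pointwise by $|DT_\e-DT|$, and its $L^2(A_R)$ norm is controlled by $\|DT_\e-DT\|_{L^2(A_R)}\leq|A_R|^{1/6}\|DT_\e-DT\|_{L^3(A_R)}$, which tends to zero by Assumption \ref{assump}(iii). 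For the second piece, the map $z\mapsto z^\perp/|z|^2$ is uniformly continuous on the fixed compact subset of $\{|z|>1\}$ containing $T(A_R)$ and $T_\e(A_R)$; combined with the uniform convergence $T_\e\to T$ on $A_R$ (Remark \ref{remark_assump}), this gives $\frac{T_\e^\perp}{|T_\e|^2}-\frac{T^\perp}{|T|^2}\to 0$ uniformly on $A_R$. Multiplying by the bounded function $DT$ and integrating over the finite-measure set $A_R$ shows this piece tends to zero in $L^2(A_R)$. Since $0\leq\F^{\e,\l}\leq 1$, the whole first term tends to zero.

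For the second term I use that $\F$ is globally Lipschitz on $\R$ (being smooth with bounded derivative), so
$|\F^{\e,\l}-\F^{0,\l}|\leq\frac{\|\F'\|_{L^\infty}}{\l}\bigl||T_\e|-|T|\bigr|\leq\frac{C}{\l}|T_\e-T|\to 0$
uniformly on $A_R$ by Remark \ref{remark_assump}. Since $H=\frac{1}{2\pi}DT^t\frac{T^\perp}{|T|^2}$ is bounded on $A_R$, the product is dominated by $\|H\|_{L^\infty(A_R)}\|\F^{\e,\l}-\F^{0,\l}\|_{L^\infty(A_R)}|A_R|^{1/2}\to 0$. Combining the two terms yields $\|v^\e-v\|_{L^2(B(0,R))}\to 0$, and as $R$ is arbitrary this is precisely the claimed convergence in $L^2_{\loc}(\R^2)$.

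I expect the only genuine care to be in the first term: one must downgrade the merely $L^3$ convergence of $DT_\e$ to $L^2$ on the bounded set $A_R$, and, more importantly, verify that $A_R$ can be chosen uniformly in $\e$ so as to avoid both the obstacle and the curve, thereby keeping $DT$ bounded and $|T_\e|,|T|$ bounded away from $1$ there. The cutoff difference is comparatively soft, relying only on the Lipschitz continuity of $\F$ and the uniform convergence of the moduli $|T_\e|$.
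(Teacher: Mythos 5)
Your proof is correct and follows essentially the same route as the paper: the identical decomposition $v^\e-v=\a\F^{\e,\l}(H^\e-H)+\a(\F^{\e,\l}-\F^{0,\l})H$, with the first term handled via Assumption \ref{assump}(iii) on the annulus where the cutoffs are supported, and the second via the Lipschitz bound $|\F^{\e,\l}-\F^{0,\l}|\leq C\l^{-1}\bigl||T_\e|-|T|\bigr|$ and Remark \ref{remark_assump}. The only difference is that you spell out the sub-splitting of $H^\e-H$ and the $L^3\to L^2$ downgrade explicitly, which the paper leaves implicit.
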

\begin{proof} For any compact $K$ of $\R^2$, using the explicit formula of $v^\e$ and $v$, we have
\begin{eqnarray*}
\|v^\e -v \|_{L^2(K)}&=&  \frac{\a}{2\pi}\Bigl\| \F^{\e,\l}  \Bigl(DT_\e^t  \frac{T_\e^\perp }{|T_\e |^2}-DT^t  \frac{T^\perp }{|T |^2}\Bigl)\\
&& \hspace{3cm} + (\F^{\e,\l}  -\F^{0,\l} )\Bigl(DT^t  \frac{T^\perp }{|T |^2}\Bigl)  \Bigl\|_{L^2(K)}\\
&\leq&  \frac{\a}{2\pi}\Bigl\| \F^{\e,\l}  \Bigl(DT_\e^t  \frac{T_\e^\perp }{|T_\e |^2}-DT^t  \frac{T^\perp }{|T |^2}\Bigl)\Bigl\|_{L^2(K)} \\
&&+ \frac{\a}{2\pi}\|\F^{\e,\l}  -\F^{0,\l} \|_{L^\infty} \|DT^t \|_{L^2(K)}.
\end{eqnarray*}
Recalling that $\F^{\e,\l}=0$ on a ball of radius $C_1 \l$, then from Assumption \ref{assump} (iii) and Remark \ref{remark_assump}, we can conclude that the first term tends to zero. For the second one, we note that the cutoff function $\F$ is Lipschitz, and by the explicit formula of $\F^{\e,\l}$ given in (\ref{phi}) we conclude that
$$|\F^{\e,\l}(x) -\F^{0,\l}(x)| \leq (\sup |\F'|) \Bigl|\frac{|T_\e(x)|-|T(x)|}{\l}\Bigl|.$$
Then, on the annulus (chosen independent of $\e$) where $\F^{\e,\l}-\F^{0,\l}$ is not zero, the previous term tends to zero thanks to Remark \ref{remark_assump}.
\end{proof}

Therefore we can formulate precisely the notion of weak solution we will use.
\begin{definition} \label{def} Let $u_0$ be such that $u_0-v\in \Hc_\G$. We say that $u$ is a weak solution of the incompressible Navier-Stokes equations  on $\R^+\times (\R^2\setminus \G)$ with initial velocity $u_0$ if and only if $u-v$ belongs to the space
$$C([0,\infty);\Hc_\G)\cap L^2_{\loc}([0,\infty);\Vc_\G)$$
and for any divergence-free test vector field $\p\in C_c^\infty((0,\infty)\times (\R^2\setminus \G))$, the vector field $u$ satisfies the following condition:
\begin{equation}\label{eq1}
\int_0^\infty\int_{\R^2\setminus\G}( u\cdot \p_t+[(u\cdot \na)\p]\cdot u+\nu u\cdot \D \p)\,dx\,dt=0.
\end{equation}
Furthermore, $\diver u=0$ in the sense of distributions, and $u(\cdot,t)\rightharpoonup u_0$ in the sense of distributions as $t\to 0^+$.
\end{definition}

\begin{remark} \label{est_t} In fact, if we prove that the vector field $u$ verifies (\ref{eq1}) for all divergence-free test vector fields $\p\in C_c^\infty((0,\infty)\times (\R^2\setminus \G))$, with $u-v$ belonging to $L^2_{\loc}([0,\infty);\mathcal{V}_\G)\cap L^\infty_{\loc}((0,\infty);\mathcal{H}_\G)$  then
\begin{equation}\label{est_t2}
\pd_t u\in L^2_{\loc}([0,\infty),\Vc_\G').
\end{equation}
Indeed, with Lemma \ref{v_est} and the interpolation inequality $\|u-v\|_{L^4(L^4)} \leq C\|u-v\|_{L^\infty(L^2)}^{1/2} \|\na(u-v)\|_{L^2(L^2)}^{1/2}$, we remark that $u$ belongs to \newline $L^4_{\loc}([0,\infty);L^4(\R^2\setminus\G))$ and $\na u$ belongs to $L^2_{\loc}([0,\infty);L^2(\R^2\setminus\G))$.For each $T>0$, using (\ref{eq1}) and Theorem \ref{u_est} for each divergence-free function $\p\in C_c^\infty((0,T)\times (\R^2\setminus \G))$, we have
\begin{eqnarray*} \langle \pd_t u, \p\rangle &\leq& (\|u\|^2_{L^4((0,T);L^4)}+ \n \|\na u\|_{L^2((0,T);L^2)}) \|\na \p\|_{L^2((0,T);L^2)} \\
&\leq& C \|\p\|_{L^2((0,T);\Vc_\G)}
\end{eqnarray*}
with a constant $C>0$. As the set of divergence-free function belonging in
$C_c^\infty((0,T)\times (\R^2\setminus \G))$ is dense on $L^2([0,T],\Vc_\G)$ (thanks to Proposition \ref{approx}), then the linear form $\p\mapsto \int\int \pd_t u\cdot  \p$ is bounded on $L^2([0,T],\Vc_\G)$, so (\ref{est_t2}) follows.
\end{remark}

\begin{theorem} \label{main}
There exists one strong limit $u$ of $\{ E u^\e\}$ in $L^2_{\loc}([0,\infty)\times (\R^2\setminus\G))$ which is a weak solution of the Navier-Stokes equations in $\R^2\setminus\G$ in the sense of Definition \ref{def}, with initial velocity given by $u_0=K[\o_0]+\a H$.
\end{theorem}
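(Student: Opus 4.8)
The plan is to fix the strongly convergent subsequence furnished by Lemma \ref{com}, so that $Eu^\e\to u$ in $L^2_{\loc}([0,\infty)\times(\R^2\setminus\G))$, and to combine it with Lemma \ref{v_conv} to get $Eu^\e-v^\e\to u-v$ in the same topology. I would first verify that the limit has the regularity demanded by Definition \ref{def}. Since $\{Eu^\e-v^\e\}$ is bounded in $L^\infty_{\loc}((0,\infty);\Hc_\G)\cap L^2_{\loc}([0,\infty);\Vc_\G)$ by Corollary \ref{u_est_2}, weak-$*$ and weak compactness produce a limit in those spaces which, by uniqueness of distributional limits, must coincide with $u-v$; hence $u-v\in L^\infty_{\loc}((0,\infty);\Hc_\G)\cap L^2_{\loc}([0,\infty);\Vc_\G)$. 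Invoking Remark \ref{est_t} gives $\pd_t u\in L^2_{\loc}([0,\infty);\Vc_\G')$, and since $v$ is time-independent the Lions--Magenes theorem on the Gelfand triple $\Vc_\G\hookrightarrow\Hc_\G\hookrightarrow\Vc_\G'$ upgrades $u-v$ to an element of $C([0,\infty);\Hc_\G)$ after modification on a null set. The identity $\diver u=0$ in the sense of distributions is immediate from $\diver Eu^\e=0$ and the $L^2_{\loc}$ convergence.

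Next I would pass to the limit in the weak formulation. Fix a divergence-free $\p\in C_c^\infty((0,\infty)\times(\R^2\setminus\G))$. Because $\O_\e$ shrinks to $\G$, there is $\e_\p>0$ such that $\supp\p\subset\Pi_\e$ for all $\e\le\e_\p$; multiplying (\ref{NS}) by $\p$ and integrating by parts (the pressure drops out as $\diver\p=0$, the viscous term becomes $\nu\, u^\e\cdot\D\p$, and the convective term produces $[(u^\e\cdot\na)\p]\cdot u^\e$ with no boundary contribution since $\supp\p$ avoids $\O_\e$) yields the $\e$-level version of (\ref{eq1}) for $Eu^\e$. The two linear terms converge because $\p_t$ and $\D\p$ are bounded with compact support and $Eu^\e\to u$ in $L^2_{\loc}$. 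The delicate term is the quadratic one: here I would use that $Eu^\e\otimes Eu^\e\to u\otimes u$ in $L^1_{\loc}$, which follows by writing the difference as $(Eu^\e-u)\otimes Eu^\e+u\otimes(Eu^\e-u)$ and estimating each piece by Cauchy--Schwarz, using the strong $L^2_{\loc}$ convergence together with the uniform $L^4_{\loc}$ bound of Theorem \ref{u_est}(3). This is where the compactness of Lemma \ref{com} is essential, and it is the conceptual heart of the argument even though the computation is short once strong convergence is available. Passing to the limit gives (\ref{eq1}).

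Finally I would identify the initial velocity. Testing (\ref{NS}) against a divergence-free $\p\in C_c^\infty([0,\infty)\times(\R^2\setminus\G))$ that need not vanish at $t=0$ produces the same identity augmented by the boundary term $-\int u_0^\e\cdot\p(\cdot,0)\,dx$; letting $\e\to 0$ and using $Eu_0^\e\to u_0$ (Lemma \ref{u_0_conv}) fixes its limit. On the other hand, the regularity $u-v\in C([0,\infty);\Hc_\G)$ together with $\pd_t u\in L^2_{\loc}([0,\infty);\Vc_\G')$ legitimizes integrating $\pd_t u$ by parts in the identity satisfied by $u$, producing the boundary term $-\int u(\cdot,0)\cdot\p(\cdot,0)\,dx$. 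Comparing the two shows that $\int (u(\cdot,0)-u_0)\cdot\p(\cdot,0)\,dx=0$ for every divergence-free $\p(\cdot,0)$, whence $u(\cdot,0)=u_0$ and $u(\cdot,t)\rightharpoonup u_0$ as $t\to 0^+$. The main obstacle is ensuring that all the pieces (weak-limit regularity, the Gelfand-triple continuity, and the boundary-term bookkeeping at $t=0$) fit together rigorously; the passage to the limit inside the equation itself is comparatively routine once the strong $L^2_{\loc}$ compactness is in hand.
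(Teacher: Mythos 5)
Your argument follows the paper's proof essentially step for step: the same use of Lemma \ref{com} for the strong $L^2_{\loc}$ convergence, Lemma \ref{v_conv} and Corollary \ref{u_est_2} for the regularity of $u-v$, Remark \ref{est_t} together with the Lions--Magenes/Temam lemma for continuity into $\Hc_\G$, and the same passage to the limit in the weak formulation (the paper is terser on the quadratic term, but your decomposition of $Eu^\e\otimes Eu^\e-u\otimes u$ is exactly what is implicit there). The one genuine divergence is the identification of the initial velocity: the paper exploits Proposition \ref{com_w}, which gives precompactness of $\{Eu^\e\}$ in $L^\infty((0,T);H^{-3}(O))$ --- hence convergence uniform in time --- so that $Eu^\e_0\to u|_{t=0}$ in $H^{-3}_{\loc}$, and the identification $u|_{t=0}=K[\o_0]+\a H$ then follows from Lemma \ref{u_0_conv} by uniqueness of distributional limits, with no integration by parts at $t=0$. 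Your route through test functions not vanishing at $t=0$ is also valid, but it costs more bookkeeping: you must first upgrade (\ref{eq1}) to an identity in $\Vc_\G'$ for a.e.\ $t$ (via Proposition \ref{approx} and Remark \ref{est_t}) before the boundary term at $t=0$ is legitimate, and concluding $u(\cdot,0)=u_0$ from orthogonality against all $\p(\cdot,0)$ uses that $u(\cdot,0)-v$ and $u_0-v$ both lie in $\Hc_\G$; the latter membership is proved separately at the start of the paper's proof (by approximating $Eu_0^\e-v^\e$, supported in the smooth domain $\Pi_\e$, by elements of $V(\R^2\setminus\G)$ and extracting diagonally), a point your write-up leaves implicit. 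The paper's $H^{-3}$ device is the cheaper of the two once Proposition \ref{com_w} is in hand.
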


\begin{proof} By Lemmas \ref{u_0_conv} and \ref{v_conv}, we know that $Eu_0^\e-v^\e\to u_0-v$ in $L^2_{\loc}(\R^2)$. According to Theorem \ref{u_est}, $u_0-v$ belongs to $L^2(\R^2)$. Moreover, $Eu_0^\e-v^\e$ is supported in a smooth domain ($\Pi_\e$), then we can approach it by functions in $V_\G$. Then, by a diagonal extraction, we obtain that $u_0-v\in \Hc_\G$.

Let $\p\in C_c^\infty((0,\infty)\times (\R^2\setminus \G))$, such that $\diver \p=0$. If we consider $\e$ small enough such that the support of $\p$ does not intersect $\O_\e$, we can rewrite the integrals on $\Pi_\e$ as full plane integrals, using the extension operator and multiplying (\ref{NS}) by $\p$, we obtain the following relation:
$$\int_0^\infty\int_{\R^2\setminus\G}( Eu^\e\cdot \p_t+[(Eu^\e\cdot \na)\p]\cdot Eu^\e+\nu Eu^\e\cdot \D \p)\,dx\,dt=0.$$
Thanks to the convergence of $Eu^\e$ to a vector field $u$ in $L^2_{\loc}([0,\infty)\times \R^2\setminus\G)$ (see Lemma \ref{com}), we can pass to the limit $\e\to 0$ and obtain that $u$ satisfies (\ref{eq1}).

Moreover, $v^\e$ tends to $v$ (see Lemma \ref{v_conv}) so, passing to a subsequence if necessary, Corollary \ref{u_est_2} implies that $u-v$ belongs in $L^2_{\loc}([0,\infty);\mathcal{V}_\G)\cap L^\infty_{\loc}((0,\infty);\mathcal{H}_\G)$. The incompressible condition is a consequence of the strong convergence of divergence-free vector fields (Lemma \ref{com}).

Now, we prove that $u-v$ belongs to $C([0,\infty);\Hc_\G)$. We know from Corollary \ref{u_est_2} that $u-v$ belongs to $L^2([0,T];\Vc_\G)$ and from  Remark \ref{est_t} that its derivative $\pd_t (u-v)$ belongs to $L^2([0,T];\Vc_\G')$. As $\Vc_\G \hookrightarrow \Hc_\G \equiv \Hc_\G'\hookrightarrow \Vc_\G'$, then Lemma 1.2 in Chapter III of \cite{temam} (see also the theorem of interpolation of Lions-Magenes \cite{lions}) allows us to state that $u-v$ is almost everywhere equal to a function continuous from $(0,T)$ into $\Hc_\G$ and we have the following equality, which holds in the scalar distribution sense on $(0,T)$:
\begin{equation}\label{deriv}
\frac{d}{dt}|u-v|^2=2\langle \pd_t (u-v),u-v\rangle.
\end{equation}
Therefore, $u-v\in C([0,\infty);\mathcal{H}_\G)$.

Furthermore, since $Eu^\e$ converges to $u$ uniformly in time with values in $H^{-3}_{\loc}(\R^2\setminus \G)$ (by Proposition \ref{com_w}), one has that $Eu^\e_0$ converges to $u_{t=0}$ in $H^{-3}_{\loc}$. On the other hand, Lemma \ref{u_0_conv} states that $Eu^\e_0$ converges to $K[\o_0]+\a H $ in $L^2_{\loc}(\R^2)$. By uniqueness of the limit in $H^{-3}_{\loc}$, we conclude that $u_0=K[\o_0]+\a H$, which completes the proof.
 \end{proof}

\section{Uniqueness for the limit problem}

We now state the uniqueness result that completes Theorem \ref{main}.

\begin{proposition}\label{prop : unicity}
There exists at most one global solution in the sense of Definition \ref{def}, verifying that the initial velocity is $u_0=K[\o_0]+\a H$. 
\end{proposition}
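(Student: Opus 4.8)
The plan is to run the classical Gronwall argument for uniqueness of two-dimensional Leray solutions on the difference of two solutions, the crucial observation being that subtracting the fixed field $v$ removes the non-integrable tail at infinity and places everything in the energy space $\Hc_\G\cap\Vc_\G$.

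First I would take two solutions $u_1,u_2$ in the sense of Definition \ref{def} with the common initial datum $u_0=K[\o_0]+\a H$, and set $w\equiv u_1-u_2=(u_1-v)-(u_2-v)$. Since each $u_i-v$ lies in $C([0,\infty);\Hc_\G)\cap L^2_{\loc}([0,\infty);\Vc_\G)$, the field $v$ cancels in the difference and $w$ belongs to the same space, with $w(\cdot,0)=0$ in $\Hc_\G$: by the continuity $u_i-v\in C([0,\infty);\Hc_\G)$ the value $(u_i-v)(0)$ is the strong $\Hc_\G$-limit of $(u_i-v)(t)$, which by the distributional initial condition of Definition \ref{def} equals $u_0-v$ for both $i=1,2$. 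Subtracting the two weak formulations \eqref{eq1} and rewriting the nonlinear contribution as
$$[(u_1\cdot\na)\p]\cdot u_1-[(u_2\cdot\na)\p]\cdot u_2=[(u_1\cdot\na)\p]\cdot w+[(w\cdot\na)\p]\cdot u_2,$$
I obtain that $w$ solves the linearised weak equation
$$\int_0^\infty\!\!\int_{\R^2\setminus\G}\!\big(w\cdot\p_t+[(u_1\cdot\na)\p]\cdot w+[(w\cdot\na)\p]\cdot u_2+\nu\,w\cdot\D\p\big)\,dx\,dt=0$$
for every divergence-free $\p\in C_c^\infty((0,\infty)\times(\R^2\setminus\G))$.

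Second, I would justify the energy identity for $w$. By Remark \ref{est_t}, applied to each $u_i$, one has $\pd_t u_i\in L^2_{\loc}([0,\infty);\Vc_\G')$, hence $\pd_t w\in L^2_{\loc}([0,\infty);\Vc_\G')$, while $w\in L^2_{\loc}([0,\infty);\Vc_\G)$. The identity \eqref{deriv} (Lemma 1.2, Chapter III of \cite{temam}, or the Lions--Magenes interpolation theorem \cite{lions}) then gives $w\in C([0,\infty);\Hc_\G)$ and, in the scalar distribution sense on $(0,T)$,
$$\tfrac{d}{dt}\|w\|_{L^2}^2=2\langle\pd_t w,w\rangle_{\Vc_\G',\Vc_\G}.$$
Using Proposition \ref{approx} to pass in the weak equation for $w$ from test fields in $C_c^\infty((0,T)\times(\R^2\setminus\G))$ to test fields in $L^2([0,T];\Vc_\G)$, and integrating by parts in space to transfer the derivatives onto the coefficients, I would identify, for almost every $t$,
$$\langle\pd_t w,w\rangle=-\nu\|\na w\|_{L^2}^2-\int_{\R^2\setminus\G}[(u_1\cdot\na)w]\cdot w\,dx-\int_{\R^2\setminus\G}[(w\cdot\na)u_2]\cdot w\,dx.$$
The middle integral vanishes: since $\diver u_1=0$, one has $\int[(u_1\cdot\na)w]\cdot w=\tfrac12\int u_1\cdot\na|w|^2=0$, first on compactly supported divergence-free fields (where neither $\G$ nor infinity contributes) and then by density for $w\in\Vc_\G$, the trilinear form being continuous because $u_1\in L^4(\R^2)$ — which follows from $u_1-v\in\Hc_\G\cap\Vc_\G$ together with $v\in L^4$ by Lemma \ref{v_est}(a) — and $w\in\Vc_\G\hookrightarrow L^4(\R^2)$.

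Third, I would close the estimate by the two-dimensional interpolation inequality $\|w\|_{L^4}\leq C\|w\|_{L^2}^{1/2}\|\na w\|_{L^2}^{1/2}$ (already used in Lemma \ref{est}):
$$\Big|\int_{\R^2\setminus\G}[(w\cdot\na)u_2]\cdot w\,dx\Big|\leq\|w\|_{L^4}^2\|\na u_2\|_{L^2}\leq C\|w\|_{L^2}\|\na w\|_{L^2}\|\na u_2\|_{L^2}\leq\tfrac{\nu}{2}\|\na w\|_{L^2}^2+\frac{C^2}{2\nu}\|\na u_2\|_{L^2}^2\|w\|_{L^2}^2.$$
Absorbing $\tfrac{\nu}{2}\|\na w\|_{L^2}^2$ into the dissipation yields
$$\tfrac{d}{dt}\|w\|_{L^2}^2\leq\frac{C^2}{\nu}\|\na u_2\|_{L^2}^2\,\|w\|_{L^2}^2.$$
Since $\na u_2=\na(u_2-v)+\na v$, with $\na(u_2-v)\in L^2_{\loc}([0,\infty);L^2(\R^2))$ by Corollary \ref{u_est_2} and $\na v\in L^2(\R^2)$ by Lemma \ref{v_est}(b), the scalar function $t\mapsto\|\na u_2(t)\|_{L^2}^2$ lies in $L^1_{\loc}([0,\infty))$. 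Gronwall's inequality together with $\|w(0)\|_{L^2}=0$ then forces $\|w(t)\|_{L^2}=0$ for every $t$, i.e.\ $u_1=u_2$.

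The Gronwall step is routine; the real work lies in the rigorous derivation of the energy identity, where one must verify that the three trilinear integrals are finite and that $w$ is an admissible test field. This is precisely where the global (not merely local) integrability $u_i\in L^4_{\loc}([0,\infty);L^4(\R^2))$ enters — it rests on $v\in L^4(\R^2)$ from Lemma \ref{v_est}(a), compensating for the fact that $u_i$ itself is not square-integrable at infinity — together with the density statement of Proposition \ref{approx}, which is what permits the cancellation $\int[(u_1\cdot\na)w]\cdot w=0$ to be performed over the unbounded and non-smooth domain $\R^2\setminus\G$ without generating spurious boundary contributions at the endpoints of the curve.
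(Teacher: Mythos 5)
Your argument is correct and follows essentially the same route as the paper's proof: both test the equation for the difference $\tilde u=u_1-u_2$ against approximations $\p_n\to\tilde u$ in $L^2([0,T];\Vc_\G)$ supplied by Proposition \ref{approx}, use $\pd_t\tilde u\in L^2_{\loc}([0,\infty);\Vc_\G')$ from Remark \ref{est_t} together with \eqref{deriv} to obtain the energy identity with $\tilde u(\cdot,0)=0$, and close via the Ladyzhenskaya interpolation inequality and Gronwall. The only (harmless) deviation is in the trilinear term: you integrate by parts once more so that the $u_1$-term cancels exactly and the Gronwall weight becomes $\|\na u_2\|_{L^2}^2$, whereas the paper keeps the gradient on the test function and uses the weight $\|u_1\|_{L^4}^4+\|u_2\|_{L^4}^4$; both weights are locally integrable in time for the same reasons (Lemma \ref{v_est} and Definition \ref{def}).
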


\begin{proof} Let $u_1$ and $u_2$ two global solutions of the Navier-Stokes equations around the curve $\G$ with the same initial velocity $u_0=K[\o_0]+\a H$. By remark \ref{est_t} we have that $\pd_t u_i$ belong to $L^2_{\loc}([0,\infty),\Vc_\G')$, for $i=1,2$.

If we denote $\tilde u=u_1-u_2$, then by Proposition \ref{approx}, for a fixed $T>0$ there exist a divergence-free family $\{\p_n\}$ in $C^\infty_c((0,T)\times \R^2\setminus \G))$ such that  $\p_n\to \tilde u$ in $L^2([0,T];\Vc_\G)$. 

Subtracting the equations satisfied by $u_1$ and $u_2$, and multiplying by the test function $\p_n$, we see that
\begin{equation}\label{uni}\begin{split}
\int_0^T\int_{\R^2\setminus\G} \pd_t \tilde u\cdot \p_n &\,dx\,dt  -\nu \int_0^T\int_{\R^2\setminus\G} \tilde u\cdot \D \p_n\,dx\,dt\\
& = \int_0^T\int_{\R^2\setminus\G} \bigl([(\tilde u\cdot \na)\p_n]\cdot u_1+[( u_2\cdot \na)\p_n]\cdot \tilde u \bigl)\,dx\,dt.
\end{split}\end{equation}
Using the interpolation inequality $\|u^\e\|_{L^4(L^4)} \leq C\|u^\e\|_{L^\infty(L^2)}^{1/2}\|\na u^\e\|_{L^2(L^2)}^{1/2}$, the right hand side term can be bounded by 
\begin{equation*}\begin{split}
\int_0^T \|\tilde u\|_{L^4}(\| u_1 \|_{L^4}&+\|u_2\|_{L^4}) \| \na \p_n\|_{L^2} \\
&\leq C\int_0^T  \| \na \p_n\|_{L^2} \| \na \tilde u\|_{L^2}^{1/2}\|\tilde u\|_{L^2}^{1/2} (\| u_1 \|_{L^4}+\|u_2\|_{L^4}) \\
&\leq \frac{\n}{2}  \int_0^T  \| \na \p_n\|_{L^2}^2 +  \frac{\n}{2}  \int_0^T  \| \na \tilde u\|_{L^2}^2 \\
&\hspace{3cm}+C_1  \int_0^T \|\tilde u\|_{L^2}^{2} (\| u_1 \|_{L^4}^4+\|u_2\|_{L^4}^4),
\end{split}\end{equation*}
with constants $C$ and $C_1$ independent of $T$. For the left hand-side term, thanks to (\ref{deriv}) and because $\tilde u(.,0)=0$, we can write that
\begin{eqnarray*}
\int_0^T\int_{\R^2\setminus\G} \pd_t \tilde u\cdot \p_n \,dx\,dt&=&\int_0^T\int_{\R^2\setminus\G} \pd_t \tilde u\cdot \tilde u \,dx\,dt\\
&& +\int_0^T\int_{\R^2\setminus\G} \pd_t \tilde u\cdot (\p_n-\tilde u) \,dx\,dt\\
&=& \frac{1}{2}\|\tilde u(\cdot,T)\|_{L^2(\R^2)}\\
&&+\int_0^T\int_{\R^2\setminus\G}\pd_t \tilde u\cdot (\p_n-\tilde u) \,dx\,dt.
\end{eqnarray*}
The last double integral tends to zero as $n\to \infty$ because $\pd_t \tilde u$ belongs to $L^2_{\loc}([0,\infty);\Vc_\G')$ and $\p_n$ converges to $\tilde u$ in  $L^2([0,T];\Vc_\G)$.

In the same way, we have that 
\begin{eqnarray*}
-\lim_{n\to \infty} \int_0^T\int_{\R^2\setminus\G} \tilde u\cdot \D \p_n\,dx\,dt &=& \lim_{n\to \infty} \int_0^T\int_{\R^2\setminus\G} \na\tilde u\cdot \na \p_n\,dx\,dt \\
&=& \int_0^T\int_{\R^2\setminus\G} \na\tilde u\cdot \na\tilde u \,dx\,dt \\
&&+ \lim_{n\to \infty} \int_0^T\int_{\R^2\setminus\G} \na\tilde u\cdot (\na \p_n-\na \tilde u)\,dx\,dt \\
&=& \| \na \tilde u\|_{L^2([0,T],L^2(\R^2))}^2,
\end{eqnarray*}
because  $\na\tilde u$ belongs to $L^2([0,T];\Hc_\G)$ and $\na\p_n$ converges to $\na\tilde u$ in  $L^2([0,T];\Hc_\G)$. This convergence implies also that \newline $\lim_{n\to \infty} \| \na \p_n\|_{L^2([0,T]\times \R^2)}^2=\| \na \tilde u\|_{L^2([0,T]\times \R^2)}^2$.
Therefore, passing to the limit $n\to \infty$ in (\ref{uni}) yields
$$\|\tilde u(\cdot,T)\|_{L^2}^2  \leq 2 C_1 \int_0^T \|\tilde u\|_{L^2}^{2} (\| u_1 \|_{L^4}^4+\|u_2\|_{L^4}^4).$$
This last equality holds for all $T>0$, with the constant $C_1$ independent of $T$. Noting that the functions $t\mapsto \|\tilde u(\cdot,t)\|_{L^2}^2$,  $t\mapsto (\| u_1(\cdot,t) \|_{L^4}^4+\|u_2(\cdot,t)\|_{L^4}^4)$, and  $t\mapsto \|\tilde u(\cdot,t)\|_{L^2}^2 (\| u_1(\cdot,t) \|_{L^4}^4+\|u_2(\cdot,t)\|_{L^4}^4)$ are $L^1_{\loc}$, we can apply Gronwall lemma to get that 
$$\|\tilde u(\cdot,T)\|_{L^2}^2 \leq 0,$$
which concludes the proof of uniqueness.
\end{proof}

Once the uniqueness of the limit velocity is established, and given that from Theorem \ref{main} we know that from every sequence of solutions $u^\e$ we can extract a subsequence converging in $L^2_{\loc}([0,\infty)\times (\R^2\setminus\G))$, we deduce with a standard argument that strong convergence in $L^2_{\loc}([0,\infty)\times (\R^2\setminus\G))$ holds without need to extract a subsequence. Theorem \ref{intro} is therefore completely proved.

\subsection*{Acknowledgments}

I would like to thank S. Monniaux for several interesting and helpful discussions.

\section*{List of notations}

\subsection*{Domains:}\

$D\equiv B(0,1)$ the unit disk.

$S\equiv \pd D$.

$\G$ is a Jordan arc (see Proposition \ref{ana_comp}).

$\Pi \equiv \R^2\setminus \G$.

$\O_\e$ is a bounded, open, connected, simply connected subset of the plane, such as $\O_\e\to \G$ as $\e\to 0$.

$\G_\e\equiv \pd\O_\e$  is a $C^\infty$ Jordan curve and $\Pi_\e\equiv \R^2\setminus \overline{\O_\e}$.

\subsection*{Functions:}\

$\o_0$ is the initial vorticity ($C^\infty_c(\Pi)$).

$\g$ is the circulation of $u_0^\e$ on $\G_\e$ (see Introduction).

$u^\e$ is the solution of the Navier-Stokes equations on $\Pi_\e$.

$T$ is a biholomorphism between $\Pi$ and $\inte\ D^c$ (see Proposition \ref{ana_comp}).

$T_\e$ is a biholomorphism between $\Pi_\e$ and $\inte\ D^c$ (see Assumption \ref{assump}).

$K^\e$ and $H^\e$ are given in \eqref{K} and \eqref{H}

$K^\e[\o_0](x)\equiv \int_{\Pi_\e} K^\e(x,y) \o_0(y)dy$.

$\F^{\e,\l}$ is a cutoff function (see \eqref{phi}).

$V (\O)$, $\Vc(\O)$, $\Vc'(\O)$, $\Hc(\O)$, $\Vc_\G$ and $\Hc_\G$ are some vector spaces defined in Definition \ref{spaces}.

\end{document}